\numberwithin{equation}{section}
\begin{document}
    \theoremstyle{plain}
    \newtheorem{thm}{Theorem}[section]
    \newtheorem{theorem}[thm]{Theorem}
    \newtheorem{lemma}[thm]{Lemma}

    \newtheorem{corollary}[thm]{Corollary}
    \newtheorem{corollary*}[thm]{Corollary*}
    \newtheorem{proposition}[thm]{Proposition}
    \newtheorem{proposition*}[thm]{Proposition*}
    \newtheorem{conjecture}[thm]{Conjecture}
    \theoremstyle{definition}
    \newtheorem{construction}[thm]{Construction}
    \newtheorem{notations}[thm]{Notations}
    \newtheorem{question}[thm]{Question}
    \newtheorem{problem}[thm]{Problem}
    \newtheorem{remark}[thm]{Remark}
    \newtheorem{remarks}[thm]{Remarks}
    \newtheorem{definition}[thm]{Definition}
    \newtheorem{claim}[thm]{Claim}
    \newtheorem{assumption}[thm]{Assumption}
    \newtheorem{assumptions}[thm]{Assumptions}
    \newtheorem{properties}[thm]{Properties}
    \newtheorem{example}[thm]{Example}
    \newtheorem{comments}[thm]{Comments}
    \newtheorem{blank}[thm]{}
    \newtheorem{observation}[thm]{Observation}
    \newtheorem{defn-thm}[thm]{Definition-Theorem}


    \title{Affine geometry and Frobenius algebra}
    \author{Kefeng Liu}
    \address{Department of Mathematics,University of California at Los
    Angeles, Box 951555, Los Angeles, CA, 90095-1555, USA.}
    \email{liu@math.ucla.edu}
    \author{Hao Xu}
    \address{Center of Mathematical Sciences, Zhejiang University, Hangzhou, Zhejiang 310027, China}
    \email{mathxuhao@gmail.com}
    \author{Yanhui Zhi}
    \address{Center of Mathematical Sciences, Zhejiang University, Hangzhou, Zhejiang 310027, China}
    \email{21935038@zju.edu.cn}

\begin{abstract}
The associativity of the multiplication on a Frobenius manifold is
equivalent to the WDVV equation of a symmetric cubic form in flat
coordinates. Frobenius manifold could be regarded a very special
type of statistical manifold. There is a natural commutative product
on each tangent space of a statistical manifold. We show that it is
associative, hence making it into a manifold with Frobenius algebra
structure, if and only if the sectional $K$-curvature vanishes. In
other words, WDVV equation is equivalent to zero sectional
$K$-curvature. This gives a curvature interpretation for WDVV
equation.
\end{abstract}

    \maketitle

    \section{Introduction}

The classical affine differential geometry studies properties of
hypersurfaces in $\mathbb R^{n+1}$ that are invariant under affine
transformations. The work of Calabi and Cheng-Yau on affine
differential geometry played an important role in Yau's proof of the
Calabi conjecture.

What we are going to study in this paper is usually called
information geometry or statistical geometry, which belongs to
affine differential geometry in the broad sense.

A statistical manifold is defined to be a Riemannian manifold
equipped with two torsion-free connections dual to each other with
respect to the Riemannian metric, or equivalently defined as a
Riemannian manifold with a totally symmetric $(1,2)$-tensor. It was
so named because some early examples of stastistical manifolds are
from families of probability distributions.

Locally strictly convex equiaffine hypersurfaces in $\mathbb R^n$
and Hessian manifolds are examples of statistical manifolds. Many
results in classical affine geometry could be studied from the point
of view of statistical manifolds. If the two dual connections are
both flat, we arrived at the dually flat structure introduced by
Amari-Nagaoka \cite{Ama} with wide applications in modern
information geometry, statistics and related fields.

Hessian metric was studied by Cheng-Yau \cite{CY} as an analogue of
a K\"ahler metric for flat affine structures. Cheng-Yau also proved
existence and uniqueness theorems for Hessian metric, which played
important role in mirror symmetry.

Frobenius manifold, or more generally manifold with Frobenius
algebra structure on their tangent bundle, is also a statistical
manifold, as they both have a symmetric $(0,3)$-tensor. But this
connection seems was largely overlooked, partly because the
definition of Frobenius manifold requires several very restrictive
properties and mostly being studied in the complex analytic setting.

Dubrovin \cite{Dub} introduced Frobenius manifolds as a geometric
framework for WDVV equation in 2D topological field theory, thus
unified Saito's unfolding spaces of singularities \cite{Sai} and
quantum cohomology \cite{Man}. Another prominent example of
Frobenius manifold is Barannikov-Kontsevich's construction \cite{BK}
from Batalin-Vilkovisky algebras in mirror symmetry.

Dubrovin \cite[p.311]{Dub} remarked the similarity of the first
structure connection on Frobenius manifold and the
$\alpha$-connection on statistical manifold, both of which are
linear deformations of Levi-Civita connection.

A statistical manifold has natural commutative product on its
tangent space \cite{CCN,CM,JTZ,NO}. Combe-Manin \cite{CM} showed
that various spaces of probability distributions carry natural
structures of $F$-manifolds, a weakened version of Frobenius
manifold. Jiang-Tavakoli-Zhao \cite{JTZ} and Nakajima-Ohmoto
\cite{NO} showed that dually flat structure (with flat metric)
implies Frobenius algebraic structure. Combe-Combe-Nencka \cite{CCN}
proved that statistical manifolds, related to exponential families
and with flat structure connection have a Frobenius manifold
structure.

The vanishing of WDVV equation characterized those statistical
manifolds with Frobenius structure on the tangent space. In this
paper, we give another characterization in terms of the sectional
$K$-curvature, a concept from affine differential geometry due to
Opozda \cite{Opo}. More precisely, we show that the associativity
holds if and only if the sectional $K$-curvature is zero. In fact,
the latter condition means the commutativity of left multiplication
operators. We provide alternative proofs of some results of Opozda
\cite{Opo} with the help of Frobenius structure. The 2-dimensional
example at the end of the paper shows that at least locally, given
any Riemannian metric, there is abundance of statistical structure
whose tangent space has Frobenius structure.

    \vskip 30pt
    \section{Statistical structure and sectional $K$-curvature}\label{sec:stat}

    Throughout the paper, a connection always means an affine
    connection on the tangent bundle. We follow the notation of
    \cite{Opo}.
    Let $\nabla$ be a connection on a Riemannian manifold $(M,g)$. Then
        the dual connection $\overline{\nabla}$ is defined to be the unique
        connection that satisfies
        $$X(g(Y,Z))=g(\nabla_X Y,Z)+g(Y,\overline\nabla_X Z)$$
        for all $X,Y,Z\in\mathfrak{X}(M)$.

If $\nabla$ is a torsion-free connection on $(M,g)$, then
$\overline\nabla$ is torsion-free if and only if $\nabla g$ is
symmetric as a $(0,3)$-tensor. A triple $(M,g,\nabla)$ is called a
statistical
    manifold if both $\nabla$ and $\overline\nabla$ are torsion-free.

On a statistical manifold, the Levi-Civita connection $\hat\nabla$
satisfies $\hat\nabla=\frac12(\nabla+\overline\nabla)$. The
Amari-Chentsov tensor $T$ is the difference of Christoffel symbols
of $\nabla$ and $\overline\nabla$, namely
\begin{equation}\label{eqC}
T_{ijk}=\overline\Gamma_{ijk}-\Gamma_{ijk}.
\end{equation}
Here
$\Gamma_{ijk}=\Gamma_{ij}^h g_{hk}$ and similarly for
$\overline\Gamma_{ijk}$.

 \begin{lemma}\cite[Theorem 6.1]{Ama}
On a statistical manifold $(M,g,\nabla)$, the Amari-Chentsov tensor
satisfies
$$T_{ijk}=\nabla_i g_{jk}.$$
In particular, the Amari-Chentsov tensor is symmetric.
 \end{lemma}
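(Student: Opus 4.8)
\emph{Proof proposal.} The plan is to unwind all three objects in a local coordinate frame $\partial_1,\dots,\partial_n$ and compare: the defining relation of $\overline\nabla$, the covariant derivative $\nabla g$, and the tensor $T$ itself. First I would evaluate the equation defining $\overline\nabla$ on the coordinate fields $X=\partial_i$, $Y=\partial_j$, $Z=\partial_k$. Since $\nabla_{\partial_i}\partial_j=\Gamma_{ij}^h\partial_h$ and $\overline\nabla_{\partial_i}\partial_k=\overline\Gamma_{ik}^h\partial_h$, and using the notation $\Gamma_{ijk}=\Gamma_{ij}^hg_{hk}$, $\overline\Gamma_{ikj}=\overline\Gamma_{ik}^hg_{hj}$, this reads
$$\partial_i g_{jk}=\Gamma_{ijk}+\overline\Gamma_{ikj}.$$

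Next I would expand the left-hand side of the asserted identity:
$$\nabla_i g_{jk}=(\nabla_{\partial_i}g)(\partial_j,\partial_k)=\partial_i g_{jk}-\Gamma_{ij}^h g_{hk}-\Gamma_{ik}^h g_{hj}=\partial_i g_{jk}-\Gamma_{ijk}-\Gamma_{ikj},$$
and substitute the identity from the previous step to obtain
$$\nabla_i g_{jk}=\overline\Gamma_{ikj}-\Gamma_{ikj}=T_{ikj},$$
the last equality being just \eqref{eqC} with the indices relabeled. Since $g$ is symmetric, the left-hand side is invariant under $j\leftrightarrow k$, so applying the same formula with $j$ and $k$ swapped gives $T_{ikj}=T_{ijk}$; hence $T$ is symmetric in its last two indices and $\nabla_i g_{jk}=T_{ijk}$, which is the displayed formula of the lemma.

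For the final assertion that $T$ is fully symmetric, I would invoke torsion-freeness of both connections, which is precisely the hypothesis that $(M,g,\nabla)$ is a statistical manifold: $\Gamma_{ij}^h=\Gamma_{ji}^h$ and $\overline\Gamma_{ij}^h=\overline\Gamma_{ji}^h$ give $T_{ijk}=(\overline\Gamma_{ij}^h-\Gamma_{ij}^h)g_{hk}=T_{jik}$, which together with the symmetry in the last two indices already established yields full symmetry. (Alternatively, one may quote the statement recalled just above, that for torsion-free $\nabla$ the dual connection $\overline\nabla$ is torsion-free if and only if $\nabla g$ is a symmetric $(0,3)$-tensor, and then use $T_{ijk}=\nabla_i g_{jk}$.) There is no genuine obstacle here; the only point requiring care is the bookkeeping of index slots — in particular that $\Gamma_{ijk}$ is a priori symmetric only in its first two indices, so the symmetry in the remaining slot must be extracted from symmetry of $g$ together with torsion-freeness rather than assumed.
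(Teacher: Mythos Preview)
Your argument is correct. It differs from the paper's in its choice of auxiliary identity: the paper eliminates $\partial_i g_{jk}$ using metric-compatibility of the Levi-Civita connection, $0=\hat\nabla_i g_{jk}=\partial_i g_{jk}-\hat\Gamma_{ikj}-\hat\Gamma_{ijk}$, together with $\hat\Gamma_{ijk}-\Gamma_{ijk}=\tfrac12 T_{ijk}$, arriving first at $\nabla_i g_{jk}=\tfrac12(T_{ikj}+T_{ijk})$ and only then extracting the symmetries. You instead plug the defining relation of the dual connection, $\partial_i g_{jk}=\Gamma_{ijk}+\overline\Gamma_{ikj}$, directly into the formula for $\nabla_i g_{jk}$ and land immediately on $\nabla_i g_{jk}=T_{ikj}$. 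Your route is a bit shorter and never mentions $\hat\nabla$; the paper's route has the modest advantage of making the relation $\hat\Gamma-\Gamma=\tfrac12 T$ explicit, which is used later. The symmetry arguments also differ in order: the paper combines the $i\leftrightarrow k$ symmetry of $\nabla g$ (the statistical hypothesis) with the built-in $i\leftrightarrow j$ symmetry of $T$, whereas you use the $j\leftrightarrow k$ symmetry of $\nabla g$ (coming from symmetry of $g$) and then invoke torsion-freeness of both connections for $i\leftrightarrow j$. Either combination yields full symmetry.
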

\begin{proof}
We have
$$\nabla_i g_{jk}=\partial_i g_{jk}-\Gamma_{ikj}-\Gamma_{ijk}.$$
and
$$0=\hat\nabla_i g_{jk}=\partial_i g_{jk}-\hat\Gamma_{ikj}-\hat\Gamma_{ijk}.$$
Since $\hat\Gamma_{ijk}-\Gamma_{ijk}=\frac{1}{2}T_{ijk}$, we have
\begin{equation}\label{eqC2}
\nabla_i g_{jk}=\frac12(T_{ikj}+T_{ijk}).
\end{equation}
Since $\nabla_i g_{jk}$ is symmetric for $i$ and $k$, the above
equation implies
$$T_{ikj}+T_{ijk}=T_{kij}+T_{kji}.$$
By definition $T_{ijk}$ is symmetric for $i$ and $j$, the above
equation implies
$$T_{ijk}=T_{kji}.$$
Namely $T_{ijk}$ is also symmetric for $i,k$. Hence $T$ is symmetric
and we get $\nabla_i g_{jk}=T_{ijk}$ from \eqref{eqC2}.
\end{proof}

Another commonly used tensor is $K_XY$ (also denoted by $K(X,Y)$)
$$K_XY:=\nabla_X Y-\hat\nabla_X Y$$
Obviously $T(X,Y,Z)=-2g(K(X,Y),Z)$.

A statistical manifold can be equivalently defined as a triple
$(M,g,K)$ where $K$ is a $(1,2)$-tensor such that $g(K(X,Y),Z)$ is
symmetric in $X,Y,Z$. This is because if such $K$ is given, then
$\nabla=\hat\nabla+K$ is a torsion-free connection and
$$\nabla_X g(Y,Z)=\hat\nabla_Xg(Y,Z)-g(K_X Y,Z)-g(Y,K_X
Z)=-2g(K(X,Y),Z).$$ In particular, $\nabla g$ is symmetric, hence
$\overline\nabla$ is torsion-free.

The Riemannian curvature tensors of $\nabla$ and $\overline\nabla$
satisfy \cite[Proposition 4.6]{NS}
\begin{equation}\label{eqR}
g(R(Z,W)X,Y)+g(\overline R(Z,W)Y,X)=0.
\end{equation}
In particular, $\nabla$ is flat if and only if $\overline\nabla$ is
flat. From \eqref{eqR}, we get the following criterion for
$R=\overline R$.
\begin{corollary}
    For a statistical manifold,
$R =\overline R$ if and only if $g(R(X,Y)Z,W)$ is skew-symmetric for
$Z,W$.
    \end{corollary}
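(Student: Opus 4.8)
The plan is to deduce the statement directly from the curvature duality identity \eqref{eqR}, which asserts that $g(R(Z,W)X,Y) + g(\overline R(Z,W)Y,X) = 0$ for all vector fields $X,Y,Z,W$. Rewriting this as $g(\overline R(Z,W)Y,X) = -g(R(Z,W)X,Y)$ isolates $\overline R$ in terms of $R$, and since $g$ is non-degenerate this formula determines $\overline R$ completely once $R$ is known. Thus the comparison of $R$ and $\overline R$ becomes a comparison of the two bilinear forms $(X,Y)\mapsto g(R(Z,W)X,Y)$ and $(X,Y)\mapsto g(R(Z,W)Y,X)$ for each fixed $Z,W$.

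Concretely, I would argue as follows. Suppose first $R=\overline R$. Substituting $\overline R = R$ into \eqref{eqR} gives $g(R(Z,W)X,Y) = -g(R(Z,W)Y,X)$ for all $X,Y,Z,W$, which is exactly the assertion that $g(R(X,Y)Z,W)$ is skew-symmetric in $Z,W$ (after renaming the dummy vector fields). Conversely, assume $g(R(X,Y)Z,W)$ is skew-symmetric in $Z,W$, i.e. $g(R(Z,W)X,Y) = -g(R(Z,W)Y,X)$. Combining this with \eqref{eqR} yields $g(\overline R(Z,W)Y,X) = -g(R(Z,W)X,Y) = g(R(Z,W)Y,X)$ for all $X$; non-degeneracy of $g$ then forces $\overline R(Z,W)Y = R(Z,W)Y$, and since $Y,Z,W$ were arbitrary we conclude $\overline R = R$.

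There is essentially no obstacle here: the argument is a one-step manipulation of \eqref{eqR} together with the non-degeneracy of $g$. The only point worth stating explicitly is that the "only if" direction uses nothing beyond substitution, while the "if" direction additionally invokes non-degeneracy to pass from equality of the pairings $g(\overline R(Z,W)Y,\cdot)$ and $g(R(Z,W)Y,\cdot)$ to equality of the curvature operators themselves.
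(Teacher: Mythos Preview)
Your proof is correct and is exactly the argument the paper intends: the corollary is stated immediately after \eqref{eqR} with only the remark ``From \eqref{eqR}, we get the following criterion for $R=\overline R$,'' and your two-step manipulation of \eqref{eqR} together with non-degeneracy of $g$ is the natural way to unpack that.
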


We also have the following relations of $R,\overline R$ and $\hat
R$.
\begin{lemma}\cite{Opo}
On a statistical manifold, the Riemannian curvature tensors of
$\nabla$, $\overline\nabla$ and $\hat\nabla$ satisfy
\begin{align} \label{eqR2}
R(X,Y) + \overline R(X,Y) &= 2\hat R(X,Y) + 2[K_X, K_Y],\\
\label{eqR3} R(X,Y) - \overline R(X,Y) &= 2(\hat\nabla_X K)_Y -
2(\hat\nabla_Y K)_X
\end{align}
\end{lemma}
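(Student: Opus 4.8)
The plan is to reduce both identities \eqref{eqR2} and \eqref{eqR3} to a single general formula describing how the curvature tensor changes when a connection is perturbed by a symmetric $(1,2)$-tensor. Since $\hat\nabla=\frac12(\nabla+\overline\nabla)$ and $K_XY=\nabla_XY-\hat\nabla_XY$, we have $\nabla=\hat\nabla+K$ and $\overline\nabla=\hat\nabla-K$. So it suffices to compute $R^{\hat\nabla+A}$ for an arbitrary symmetric $(1,2)$-tensor $A$ and then specialize to $A=\pm K$.

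First I would expand
$$R^{\hat\nabla+A}(X,Y)Z=(\hat\nabla_X+A_X)(\hat\nabla_Y+A_Y)Z-(\hat\nabla_Y+A_Y)(\hat\nabla_X+A_X)Z-(\hat\nabla_{[X,Y]}+A_{[X,Y]})Z,$$
using the Leibniz rule $\hat\nabla_X(A_YZ)=(\hat\nabla_XA)_YZ+A_{\hat\nabla_XY}Z+A_Y\hat\nabla_XZ$. The terms $A_Y\hat\nabla_XZ$ and $A_X\hat\nabla_YZ$ cancel under antisymmetrization in $X,Y$; the purely second-order terms in $\hat\nabla$ assemble into $\hat R(X,Y)Z$; and, crucially, torsion-freeness of $\hat\nabla$ gives $\hat\nabla_XY-\hat\nabla_YX=[X,Y]$, so that $A_{\hat\nabla_XY-\hat\nabla_YX}Z$ exactly cancels $A_{[X,Y]}Z$. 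What remains is
$$R^{\hat\nabla+A}(X,Y)=\hat R(X,Y)+(\hat\nabla_XA)_Y-(\hat\nabla_YA)_X+[A_X,A_Y].$$

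Applying this with $A=K$ and with $A=-K$ (noting that $[(-K)_X,(-K)_Y]=[K_X,K_Y]$) yields expressions for $R(X,Y)$ and $\overline R(X,Y)$. Adding them cancels the first-order derivative terms and gives \eqref{eqR2}, while subtracting them cancels $\hat R$ and the commutator term and gives \eqref{eqR3}. The only care needed is careful bookkeeping — tracking the Leibniz rule for $\hat\nabla$ acting on the tensor $A$ and invoking torsion-freeness at the right step — so I do not expect any genuine analytic obstacle; the computation is essentially the classical comparison formula for curvature of two connections differing by a tensor.
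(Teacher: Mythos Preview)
Your proposal is correct and follows essentially the same route as the paper: the paper also reduces both identities to the single perturbation formula $R(X,Y)=\hat R(X,Y)+(\hat\nabla_XK)_Y-(\hat\nabla_YK)_X+[K_X,K_Y]$, proved by the same direct expansion using torsion-freeness of $\hat\nabla$, and then obtains \eqref{eqR2} and \eqref{eqR3} by applying it to $\nabla=\hat\nabla+K$ and $\overline\nabla=\hat\nabla-K$ and adding/subtracting. The only cosmetic difference is that you phrase the computation for a general symmetric tensor $A$ and invoke the Leibniz rule for $\hat\nabla A$, whereas the paper expands directly for $K$ and regroups terms afterward.
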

\begin{proof}
Since $\nabla_X Y=\hat\nabla_X Y+K_X Y$ and $\overline\nabla_X
Y=\hat\nabla_X Y-K_X Y$, it is sufficient to prove
\begin{equation}\label{eqR4}
R(X,Y) = \hat R(X,Y) + (\hat\nabla_X K)_Y - (\hat\nabla_Y K)_X +
[K_X, K_Y].
\end{equation}
We have
\begin{align*}
R(X,Y)Z&=\nabla_X\nabla_Y Z-\nabla_Y\nabla_X Z-\nabla_{[X,Y]} Z\\
&=\nabla_X(\hat\nabla_Y Z+K_Y Z)-\nabla_Y(\hat\nabla_X Z+K_X
Z)-\hat\nabla_{[X,Y]}Z-K_{[X,Y]}Z\\
&=\hat\nabla_X\hat\nabla_Y Z+K_X\hat\nabla_Y Z+\hat\nabla_X K_Y
Z+K_X K_Y Z\\
 &\qquad -\hat\nabla_Y\hat\nabla_X Z-K_Y\hat\nabla_X
Z-\hat\nabla_Y K_X Z-K_Y K_X Z-\hat\nabla_{[X,Y]}Z-K_{[X,Y]} Z\\
&=(\hat\nabla_X\hat\nabla_Y Z-\hat\nabla_Y\hat\nabla_X
Z-\hat\nabla_{[X,Y]}Z)+(\hat\nabla_X K_Y Z-K_Y\hat\nabla_X
Z-K_{\hat\nabla_X Y} Z)\\
&\qquad -(\hat\nabla_Y K_X Z-K_X\hat\nabla_Y Z-K_{\hat\nabla_Y X}
Z)+(K_X K_Y Z-K_Y K_X Z)\\
&=\hat R(X,Y)Z + (\hat\nabla_X K)_Y Z - (\hat\nabla_Y K)_X Z+ [K_X,
K_Y]Z.
\end{align*}
In the next to last equation, we used the torsion-freeness
$[X,Y]=\hat\nabla_X Y-\hat\nabla_Y X$.
\end{proof}

On a statistical manifold $(M,g,K)$, we have a family of
torsion-free connections
\begin{equation}\label{eqAlpha}
\nabla^{(\alpha)}_X Y=\hat\nabla_X Y+\alpha K_X Y,\qquad
\alpha\in\mathbb R,
\end{equation}
called $\alpha$-connections. In the theory of Frobenius manifolds,
these are called Dubrovin's first structure connections \cite{Man}
and are assumed to be flat for all $\alpha$.

\begin{lemma}\label{Lem:Alpha} Assume that there exist $\beta,\gamma\in\mathbb R$ with $\beta+\gamma\neq 0$ such that
$\nabla^{(\beta)}$ and $\nabla^{(\gamma)}$ are flat. Then
$\nabla^{(\alpha)}$ are flat for all $\alpha\in\mathbb R$.
\end{lemma}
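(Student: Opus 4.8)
The plan is to show that the curvature of the $\alpha$-connection is a polynomial of degree at most $2$ in $\alpha$ (with coefficients independent of $\alpha$), and then to feed in enough values of $\alpha$ at which it vanishes to force all the coefficients to be zero. The extra values come from a duality symmetry.

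First I would run the computation in the proof of \eqref{eqR4} verbatim with $\nabla$ replaced by $\nabla^{(\alpha)}=\hat\nabla+\alpha K$. Since $(\hat\nabla_X(\alpha K))_Y=\alpha(\hat\nabla_X K)_Y$ and $[(\alpha K)_X,(\alpha K)_Y]=\alpha^2[K_X,K_Y]$, writing $R^{(\alpha)}$ for the curvature of $\nabla^{(\alpha)}$ this gives
$$R^{(\alpha)}(X,Y)=\hat R(X,Y)+\alpha\big((\hat\nabla_X K)_Y-(\hat\nabla_Y K)_X\big)+\alpha^2[K_X,K_Y].$$
Thus, for fixed $X,Y,Z\in\mathfrak X(M)$ the element $R^{(\alpha)}(X,Y)Z$ is a polynomial of degree $\le 2$ in $\alpha$ whose coefficients $\hat R$, $(\hat\nabla_\cdot K)_\cdot-(\hat\nabla_\cdot K)_\cdot$ (antisymmetrized) and $[K_\cdot,K_\cdot]$ do not depend on $\alpha$.

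Next I would observe that $\nabla^{(\alpha)}$ and $\nabla^{(-\alpha)}$ are mutually dual with respect to $g$: one has $\hat\nabla=\tfrac12(\nabla^{(\alpha)}+\nabla^{(-\alpha)})$, and the computation identifying $\overline\nabla=\hat\nabla-K$ in the paragraph preceding \eqref{eqR}, applied with $K$ replaced by $\alpha K$, shows $\overline{\nabla^{(\alpha)}}=\hat\nabla-\alpha K=\nabla^{(-\alpha)}$. Hence, by \eqref{eqR} (or directly by the fact that a connection is flat iff its dual is), $\nabla^{(\alpha)}$ is flat if and only if $\nabla^{(-\alpha)}$ is flat. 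Combined with the hypothesis, this tells us that $R^{(\alpha)}=0$ for every $\alpha$ in $\{\beta,-\beta,\gamma,-\gamma\}$.

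Finally I would observe that, because $\beta\neq\gamma$ and $\beta+\gamma\neq 0$, the set $\{\beta,-\beta,\gamma,-\gamma\}$ contains at least three distinct real numbers (a short case check: if $\beta=0$ then $\gamma\neq 0$ and $\{0,\gamma,-\gamma\}$ works; symmetrically for $\gamma=0$; and if $\beta,\gamma\neq 0$ then $\beta,-\beta,\gamma$ are pairwise distinct since $\gamma\neq\pm\beta$). A polynomial of degree $\le 2$ with values in a fixed vector space that vanishes at three distinct points is identically zero, so $\hat R=0$, $(\hat\nabla_X K)_Y=(\hat\nabla_Y K)_X$ and $[K_X,K_Y]=0$ for all $X,Y$; therefore $R^{(\alpha)}=0$ for all $\alpha\in\mathbb R$. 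The only step requiring genuine care is the first one: one must confirm that the curvature of $\hat\nabla+\alpha K$ assembles into exactly this quadratic in $\alpha$, with the $\alpha^2$-term being precisely $[K_X,K_Y]$ and nothing contaminating it; this is immediate from \eqref{eqR4}, but it is the fact that makes the root-counting argument go through.
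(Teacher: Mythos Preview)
Your proof is correct and follows essentially the same approach as the paper: both express $R^{(\alpha)}$ as a quadratic polynomial in $\alpha$ (the paper via Zhang's formula \eqref{eqR5}, you directly from \eqref{eqR4}), invoke the duality $\overline{\nabla^{(\alpha)}}=\nabla^{(-\alpha)}$ to double the supply of flat values, and then force all coefficients to vanish. Your root-counting packaging (three distinct roots kill a quadratic) is a bit more streamlined than the paper's two-step elimination (first deducing $R=\overline R$, then $[K_X,K_Y]=0$), but the substance is the same.
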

\begin{proof}
Recall Zhang's formula \cite{Zha} of Riemannian curvature tensor of
$\nabla^{(\alpha)}$ ,
\begin{equation}\label{eqR5}
R^{(\alpha)}(X,Y)=\frac{1+\alpha}{2}R(X,Y) +
\frac{1-\alpha}{2}\overline R(X,Y) -(1-\alpha^2) [K_X, K_Y],
\end{equation}
where $R(X,Y)$ and $\overline R(X,Y)$ are Riemannian curvature
tensors of $\nabla=\nabla^{(1)}$ and $\overline\nabla=\nabla^{(-1)}$
respectively. When $\alpha=0$, \eqref{eqR5} is just \eqref{eqR2}.

From \eqref{eqR5}, we get for all $\alpha\in\mathbb R$,
\begin{equation}\label{eqR6}
R^{(\alpha)}(X,Y)-R^{(-\alpha)}(X,Y)=\alpha(R(X,Y)-\overline
R(X,Y)).
\end{equation}

Since $\nabla^{(-\alpha)}$ is dual to $\nabla^{(\alpha)}$, we have
that $\nabla^{(-\beta)}$ and $\nabla^{(-\gamma)}$ are also flat.
Taking $\alpha=\beta$ and $\alpha=\gamma$ in \eqref{eqR6}, we get
$R(X,Y)=\overline R(X,Y)$. Hence \eqref{eqR5} becomes
\begin{equation}\label{eqR7}
R^{(\alpha)}(X,Y)=R(X,Y) -(1-\alpha^2) [K_X, K_Y].
\end{equation}
Taking $\alpha=\beta$ and $\alpha=\gamma$ in \eqref{eqR7} and noting
that $1-\beta^2\neq 1-\gamma^2$, we get $[K_X, K_Y]=0$. Then
\eqref{eqR7} implies that $R^{(\alpha)}(X,Y)=R(X,Y)$ for all
$\alpha\in\mathbb R$. So $R^{(\alpha)}(X,Y)=0$ for all
$\alpha\in\mathbb R$.
\end{proof}

On a statistical manifold, Opozda \cite{Opo} introduced the concept
of sectional $K$-curvature. First, Opozda noted that the
$(1,3)$-tensor $[K,K]$ given by
    \begin{equation}
        [K,K](X,Y)Z := [K_X,K_Y]Z = K_X K_Y Z - K_Y K_X Z
    \end{equation}
satisfies the same properties as the Riemannian curvature tensor,
except the second Bianchi identity.
\begin{lemma}\label{Lem:KK}\cite{Opo} The following equations of $[K,K]$ hold.
    \begin{itemize}
    \item[(i)]
        $[K,K](X,Y) = - [K,K](Y,X)$.
    \item[(ii)]
        $[K,K](X,Y)Z + [K,K](Y,Z)X + [K,K](Z,X)Y = 0$.
    \item[(iii)]
        $g([K,K](X,Y)Z,W) = -g([K,K](X,Y)W,Z)$.
    \item[(iv)]
        $g([K,K](X,Y)Z,W)=g([K,K](W,Z)Y,X)$.
    \end{itemize}
\end{lemma}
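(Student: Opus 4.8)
My plan is to trace all four identities back to two elementary algebraic properties of $K$ that follow at once from the total symmetry of $g(K(X,Y),Z)$ in the equivalent description of a statistical manifold as a triple $(M,g,K)$: namely (a) the symmetry $K_XY=K_YX$ in the two lower arguments, and (b) the self-adjointness of each operator $K_X$ with respect to $g$, i.e. $g(K_XY,Z)=g(K_XZ,Y)$. Item (i) then requires no work: $[K,K](X,Y)=[K_X,K_Y]=-[K_Y,K_X]=-[K,K](Y,X)$.

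For (ii) I would expand the cyclic sum into the six terms $K_XK_YZ-K_YK_XZ+K_YK_ZX-K_ZK_YX+K_ZK_XY-K_XK_ZY$ and group them into three pairs sharing an outer operator; using (a) in the forms $K_YZ=K_ZY$, $K_ZX=K_XZ$, $K_XY=K_YX$, each pair collapses as $K_X(K_YZ-K_ZY)=0$, $K_Y(K_ZX-K_XZ)=0$, $K_Z(K_XY-K_YX)=0$. Thus the algebraic Bianchi identity for $[K,K]$ is a pure consequence of lower-slot symmetry and, unlike the genuine first Bianchi identity for a connection, needs nothing about torsion-freeness.

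For (iii) I would move operators across the metric using (b): $g(K_XK_YZ,W)=g(K_YZ,K_XW)=g(Z,K_YK_XW)$ and symmetrically $g(K_YK_XZ,W)=g(Z,K_XK_YW)$, so that $g([K_X,K_Y]Z,W)=g(Z,[K_Y,K_X]W)=-g([K,K](X,Y)W,Z)$.

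Finally, (iv) I would deduce formally from (i)--(iii) by the classical argument that produces the pair-interchange symmetry of a curvature tensor from its two antisymmetries and its algebraic Bianchi identity: write the identity (ii) at the four cyclic shifts of $(X,Y,Z,W)$, take the appropriate alternating sum, and repeatedly use (i) and (iii) to cancel terms, arriving at $g([K,K](X,Y)Z,W)=g([K,K](Z,W)X,Y)$; then apply (i) to swap $Z$ and $W$ and (iii) to swap $X$ and $Y$ on the right-hand side to rewrite it as $g([K,K](W,Z)Y,X)$, which is the stated form. The only place where any care is needed is the index bookkeeping in this last step, but since $[K,K]$ carries exactly the symmetry type of a Riemann tensor this is entirely routine, and I do not anticipate a genuine obstacle anywhere in the proof.
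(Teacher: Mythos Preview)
Your argument is correct, but it differs from the paper's in a couple of instructive ways. For (ii) and (iii) the paper does not work directly with the symmetry and self-adjointness of $K$; instead it invokes the identity $R+\overline R=2\hat R+2[K,K]$ together with the duality relation $g(R(Z,W)X,Y)+g(\overline R(Z,W)Y,X)=0$, deducing the algebraic Bianchi identity and the skew-symmetry in the last pair from the corresponding properties of $R$, $\overline R$ and $\hat R$. Your route is more elementary and self-contained, since it shows these properties follow purely from the total symmetry of $C(X,Y,Z)=g(K(X,Y),Z)$ without any reference to curvature. For (iv), the situation is reversed: the paper computes directly
\[
g([K,K](X,Y)Z,W)=g(K(X,W),K(Y,Z))-g(K(Y,W),K(X,Z))
\]
via self-adjointness and then simply observes that $g([K,K](W,Z)Y,X)$ yields the same expression. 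This is shorter than the formal Riemann-tensor bookkeeping you propose, and the explicit formula itself is reused later in the paper (it is exactly condition (iii) of Lemma~\ref{Lem:Const}). Either approach is fine; yours highlights that $[K,K]$ has the full symmetry type of a curvature tensor for purely algebraic reasons, while the paper's direct computation for (iv) pays an extra dividend.
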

\begin{proof}
(i) follows from the definition. (ii) follows from \eqref{eqR2},
since the first Bianchi identity holds for all of $R,\overline R$
and $\hat R$.

For (iii), we first apply \eqref{eqR2} and then \eqref{eqR} to get
\begin{align*}
&g([K,K](X,Y)Z,W)+g([K,K](X,Y)W,Z)\\
=&g(R(X,Y)Z,W)+g(\overline R(X,Y)Z,W)-2g(\hat R(X,Y)Z,W)\\
&\qquad +g(R(X,Y)W,Z)+g(\overline R(X,Y)W,Z)-2g(\hat R(X,Y)W,Z)\\
=&-g(\overline R(X,Y)W,Z)+g(\overline R(X,Y)Z,W)-g(\overline
R(X,Y)Z,W)+g(\overline R(X,Y)W,Z)\\
=&0.
\end{align*}

For (iv), we use the symmetry of $K$ to get
\begin{align*}
g([K,K](X,Y)Z,W)&=g(K_X K_Y Z,W)-g(K_Y K_X Z,W)\\
&=g(K(X,W),K(Y,Z))-g(K(Y,W),K(X,Z)).
\end{align*}
Apply the above calculation to $g([K,K](W,Z)Y,X)$, we get the same
expression.
\end{proof}

It follows that for any plane $\pi$ in the tangent space $T_pM$, we
can define its sectional $K$-curvature
\begin{equation}
k(\pi)=g([K,K](e_1,e_2)e_2,e_1),
\end{equation}
where $e_1,e_2$ is an orthonormal basis of $\pi$. Note that $k(\pi)$
is independent of the choice of the orthonormal basis.

\begin{lemma}\label{Lem:Const}\cite{Opo}
The following statements are equivalent (at a given point $p\in M$).
\begin{itemize}
\item[(i)] The sectional $k$-curvature is equal to a constant $A$ for all
planes.
\item[(ii)] $[K,K](X,Y)Z=A[g(Y,Z)X-g(X,Z)Y]$.
\item[(iii)] $g(K(X,W),K(Y,Z))-g(K(Y,W),K(X,Z))=A[g(X,W)g(Y,Z)-g(Y,W)g(X,Z)]$.
\end{itemize}
\end{lemma}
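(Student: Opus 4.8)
The plan is to prove (ii) $\Leftrightarrow$ (iii) purely algebraically and then (i) $\Leftrightarrow$ (ii) by the classical polarization argument that characterizes tensors of constant sectional curvature; this argument applies verbatim because Lemma \ref{Lem:KK} shows that $[K,K]$ has all the algebraic symmetries of a Riemannian curvature tensor.

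For (ii) $\Leftrightarrow$ (iii): since the total symmetry of $g(K(X,Y),Z)$ means each operator $K_X$ is $g$-self-adjoint, the computation already carried out in the proof of Lemma \ref{Lem:KK}(iv) gives
\[
g([K,K](X,Y)Z,W) = g(K(X,W),K(Y,Z)) - g(K(Y,W),K(X,Z)).
\]
Hence pairing identity (ii) with an arbitrary $W$ against $g$ turns its left-hand side into exactly the left-hand side of (iii) and its right-hand side into $A[g(X,W)g(Y,Z) - g(Y,W)g(X,Z)]$, so (ii) and (iii) are literally the same equation.

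For (i) $\Leftrightarrow$ (ii): the direction (ii) $\Rightarrow$ (i) is immediate by setting $X = e_1$ and $Y = Z = e_2$ with $\{e_1,e_2\}$ an orthonormal basis of $\pi$. For the converse I would introduce the $(0,4)$-tensor
\[
\Phi(X,Y,Z,W) := g([K,K](X,Y)Z,W) - A\bigl[g(X,W)g(Y,Z) - g(Y,W)g(X,Z)\bigr],
\]
which, by Lemma \ref{Lem:KK} and the evident symmetries of the metric term, is skew in its first two slots, skew in its last two slots, invariant under interchanging the pair $(X,Y)$ with the pair $(Z,W)$, and satisfies the first Bianchi identity. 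Assumption (i) gives $\Phi(X,Y,Y,X)=0$ for all $X,Y$. Polarizing $X\mapsto X+Z$ and using the pair symmetry yields $\Phi(X,Y,Y,Z)=0$; polarizing once more, $Y\mapsto Y+W$, yields skew-symmetry of $\Phi$ in its second and third slots. Since the transpositions $(12),(23),(34)$ generate $S_4$, $\Phi$ is then totally antisymmetric, and substituting this into the first Bianchi identity (whose three terms are even permutations of one another in the first three slots) forces $3\Phi=0$, i.e. $\Phi\equiv 0$, which is exactly (ii).

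The only point requiring a little care is the reduction of hypothesis (i) to the statement $\Phi(X,Y,Y,X)=0$ for \emph{all} vectors $X,Y$: when $X,Y$ are linearly dependent this vanishes by skew-symmetry, and when they are independent one picks an orthonormal basis $e_1,e_2$ of $\mathrm{span}(X,Y)$ with $X=\alpha e_1$ and $Y=\beta e_1+\gamma e_2$, so the skew-symmetries collapse $\Phi(X,Y,Y,X)$ to $\alpha^2\gamma^2\,\Phi(e_1,e_2,e_2,e_1)=\alpha^2\gamma^2\bigl(k(\mathrm{span}(X,Y))-A\bigr)=0$. Beyond this normalization there is no real obstacle; everything else is the standard bookkeeping showing that a curvature-type tensor is determined by its sectional curvatures.
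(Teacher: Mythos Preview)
Your proof is correct. The paper does not supply its own proof of this lemma---it is stated with a citation to \cite{Opo}---but your approach is exactly the one the paper alludes to in the proof of the subsequent Lemma~\ref{Lem:ZeroK} (the case $A=0$), where it remarks that ``the argument is the same as that for the Riemannian curvature tensor,'' i.e., the classical polarization argument you carry out in full.
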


\begin{lemma}\label{Lem:ZeroK}
For a statistical manifold $(M,g,K)$, the following statements are
equivalent.
\begin{itemize}
\item[(i)]
The sectional $K$-curvatures are constantly zero.

\item[(ii)]
$[K,K]=0$.

\item[(iii)]
$R(X,Y) + \overline R(X,Y) = 2\hat R(X,Y)$.
\end{itemize}
In fact, the equivalence holds pointwise.
\end{lemma}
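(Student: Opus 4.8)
The plan is to establish the three equivalences by disposing of the two cheap implications first and then concentrating on the one implication with genuine content, namely (i) $\Rightarrow$ (ii). Since $[K,K]$, $R$, $\overline{R}$ and $\hat{R}$ are all tensors, every identity below can be verified at a fixed point $p\in M$, and this is precisely why the asserted equivalence is pointwise; I will not belabor this.

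For (ii) $\Leftrightarrow$ (iii) I would simply read off \eqref{eqR2}: because $R(X,Y)+\overline{R}(X,Y)=2\hat{R}(X,Y)+2[K_X,K_Y]$, the identity in (iii) holds for all $X,Y$ exactly when $[K_X,K_Y]=0$ for all $X,Y$, i.e. when $[K,K]=0$. For (ii) $\Rightarrow$ (i) there is nothing to prove: if $[K,K]=0$ then $k(\pi)=g([K,K](e_1,e_2)e_2,e_1)=0$ for every plane $\pi$.

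The substance is (i) $\Rightarrow$ (ii). If Lemma \ref{Lem:Const} is taken as available, this is immediate: constant (namely zero) sectional $K$-curvature gives, via item (ii) of that lemma with $A=0$, that $[K,K](X,Y)Z=0$. For a self-contained argument I would instead exploit Lemma \ref{Lem:KK}, which says that the $(0,4)$-tensor $Q(X,Y,Z,W):=g([K,K](X,Y)Z,W)$ carries all the algebraic symmetries of a Riemann curvature tensor. Hypothesis (i) says exactly that $Q(X,Y,Y,X)=0$ for all $X,Y$, and then the classical polarization goes through: replacing $Y$ by $Y+W$ and using the pair-interchange symmetry (item (iv) of Lemma \ref{Lem:KK}) kills the diagonal terms and yields $Q(X,Y,W,X)=0$; replacing $X$ by $X+Z$ next yields $Q(X,Y,W,Z)=-Q(Z,Y,W,X)$, so $Q$ is in addition skew in its first and fourth slots. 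Together with skew-symmetry in the first pair and in the second pair this makes $Q$ totally antisymmetric in all four arguments, whereupon the first Bianchi identity (item (ii) of Lemma \ref{Lem:KK}) becomes a cyclic sum of three equal terms and forces $3Q\equiv 0$, hence $[K,K]=0$.

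The only place I anticipate any friction is the bookkeeping inside this polarization step: keeping the sign conventions straight so that properties (i)--(iv) of Lemma \ref{Lem:KK} are invoked in the correct order. Conceptually, however, this is just the classical statement that an algebraic curvature tensor is determined by its sectional-curvature function, specialized to the zero function; everything else in the argument is purely formal.
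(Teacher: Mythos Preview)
Your proof is correct and follows essentially the same route as the paper: the paper also reads (ii)$\Leftrightarrow$(iii) off of \eqref{eqR2} and, for (i)$\Rightarrow$(ii), invokes Lemma~\ref{Lem:KK} and the classical fact that an algebraic curvature tensor vanishing on all planes must vanish identically. The only difference is that the paper leaves this last step as ``the argument is the same as that for the Riemannian curvature tensor,'' whereas you actually spell out the polarization; your additional remark that Lemma~\ref{Lem:Const} with $A=0$ gives an even quicker shortcut is also valid.
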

\begin{proof}
The equivalence of (ii) and (iii) follows from \eqref{eqR2}. The
equivalence of (i) and (ii) amounts to that the sectional
$K$-curvatures of all planes at a point determines the tensor
$[K,K]$. It is well-known that the Riemannian curvature tensor is
determined by sectional curvatures.

Write $S(X,Y,Z,W)=g([K,K](X,Y)W,Z)$. In fact, one can prove that if
$S$ is a $(0,4)$-tensor with properties of Lemma \ref{Lem:KK} and
$S(X,Y,X,Y)=0$ for all $X,Y\in T_p M$, then $S=0$ at $p$. The
argument is the same as that for the Riemannian curvature tensor.
\end{proof}

The following lemma is the key tool used by Opozda \cite{Opo} to
study statistical manifolds with constant sectional $K$-curvature.
\begin{lemma}\label{Lem:Basis}\cite{Opo} At a given point $p$ of a statistical
manifold $(M,g,K)$, if the sectional $K$-curvature is equal to
constant $A$, then there is an orthonormal basis $e_1,...,e_n$ of
$T_p M$ and numbers $\lambda _i$, $\mu _i$ for $1\leq i\leq n$ such
that
\begin{equation}\label{eqK}
K(e_i,e_i)=\mu _1e_1 +...+\mu _{i-1}e_{i-1}+\lambda _i e_i
\end{equation}
for $1\leq i\leq n$ and
\begin{equation}\label{eqK2}
K(e_i,e_j)= \mu _{i}e_j
\end{equation}
for $1\leq i<j\leq n$. Moreover $\mu_i$ are determined by
$\lambda_k$ and $A$ through the formula
\begin{equation}\label{eqmu}
\mu _i= \frac{\lambda_i -\sqrt{\lambda _i^2
-4A_{i-1}}}{2},\end{equation}
\begin{equation}\label{eqA}
A_i=A_{i-1}-\mu _{i}^2
\end{equation}
for $1\leq i\leq n$ and $A_0=A$. In \eqref{eqmu}, we require
$\lambda _i^2 -4A_{i-1}\geq0$.
\end{lemma}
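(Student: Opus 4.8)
The plan is to prove Lemma \ref{Lem:Basis} by induction on $n=\dim T_pM$, choosing $e_1$ as a maximum point of the cubic function $v\mapsto g(K(v,v),v)$ on the unit sphere of $T_pM$ and extracting information from both the first- and second-order conditions at that maximum, combined with identity (iii) of Lemma \ref{Lem:Const}. For $n=1$ there is nothing to prove (any endomorphism of a line is scalar). Throughout write $\kappa(X,Y,Z)=g(K(X,Y),Z)$, a fully symmetric trilinear form, and note that each left-multiplication operator $L_X:=K(X,\cdot)$ is $g$-self-adjoint.

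For the inductive step, let $e_1$ be a unit vector realizing $\lambda_1:=\max\{\kappa(v,v,v):|v|=1\}$. Differentiating $\kappa(w_t,w_t,w_t)$ along curves $w_t$ on the unit sphere through $e_1$ gives, at $t=0$, $3g(K(e_1,e_1),u)=0$ for every $u\perp e_1$, so $K(e_1,e_1)=\lambda_1 e_1$. Hence $L_{e_1}$ fixes $e_1$ and, being self-adjoint, preserves $V:=e_1^{\perp}$; set $B:=L_{e_1}|_V$. Substituting $X=W=e_1$ and $Y=Z=v$ (unit, $v\perp e_1$) into Lemma \ref{Lem:Const}(iii) and using self-adjointness of $B$, one obtains $\lambda_1 g(Bv,v)-g(B^2v,v)=A$ for all such $v$, i.e. $B^2-\lambda_1 B+A\cdot\mathrm{Id}_V=0$; thus every eigenvalue of $B$ is a root of $t^2-\lambda_1 t+A$, and when $n\ge 2$ this forces $\lambda_1^2-4A=(2\mu-\lambda_1)^2\ge 0$. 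To see which root occurs, take a unit eigenvector $v$ of $B$ with eigenvalue $\mu$ and compute the second variation along $w_t=\cos t\,e_1+\sin t\,v$: since $\kappa(w_t,w_t,w_t)=\lambda_1\cos^3 t+3\mu\cos t\sin^2 t+\kappa(v,v,v)\sin^3 t$, one gets $\frac{d^2}{dt^2}\big|_{t=0}=-3\lambda_1+6\mu$, which maximality forces to be $\le 0$, hence $\mu\le\lambda_1/2$. As the larger root of $t^2-\lambda_1 t+A$ is $\ge\lambda_1/2$, every eigenvalue of $B$ equals the smaller root $\mu_1=\tfrac12(\lambda_1-\sqrt{\lambda_1^2-4A})$, so $B=\mu_1\,\mathrm{Id}_V$. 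This is precisely \eqref{eqK2} and \eqref{eqK} for $i=1$.

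Now I descend to $V$. Put $\tilde K:=\Pi_V\circ(K|_{V\times V})$, where $\Pi_V$ is orthogonal projection onto $V$. Since $g(\tilde K(X,Y),Z)=g(K(X,Y),Z)$ for $X,Y,Z\in V$, this form is fully symmetric, so $(V,g|_V,\tilde K)$ is again a pointwise statistical structure. Using $g(K(X,Y),e_1)=g(K(e_1,X),Y)=\mu_1 g(X,Y)$ for $X,Y\in V$, we get $\tilde K(X,Y)=K(X,Y)-\mu_1 g(X,Y)e_1$, and substituting this into Lemma \ref{Lem:Const}(iii) for $K$ gives, after a short computation, that $\tilde K$ has constant sectional $K$-curvature $A_1=A-\mu_1^2$. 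By the inductive hypothesis there is an orthonormal basis $e_2,\dots,e_n$ of $V$ realizing \eqref{eqK}–\eqref{eqK2} for $\tilde K$ relative to $A_1$, with numbers $\lambda_i$ ($2\le i\le n$) and $\mu_i,A_i$ satisfying \eqref{eqmu}–\eqref{eqA} for $2\le i\le n$ starting from $A_1$. Lifting back via $K(X,Y)=\tilde K(X,Y)+\mu_1 g(X,Y)e_1$ and orthonormality of $e_1,\dots,e_n$, we get $K(e_i,e_j)=\mu_i e_j$ for $2\le i<j\le n$ and $K(e_i,e_i)=\mu_1 e_1+\mu_2 e_2+\cdots+\mu_{i-1}e_{i-1}+\lambda_i e_i$ for $i\ge 2$, the coefficient $\mu_1$ arising from $g(K(e_i,e_i),e_1)=g(K(e_1,e_i),e_i)=\mu_1$. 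Together with the $i=1$ case this is \eqref{eqK}–\eqref{eqK2}, and the recursions chain correctly because $A_1=A_0-\mu_1^2$.

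The main obstacle is the step that promotes the operator $B$ to a scalar: knowing only that $e_1$ is a critical point yields $K(e_1,e_1)\parallel e_1$ and, through Lemma \ref{Lem:Const}(iii), only that $B$ annihilates a quadratic polynomial, hence has at most two eigenvalues; it is the second-variation inequality at a \emph{maximum} that rules out the larger eigenvalue and makes $B$ a genuine homothety. The rest — symmetry of $\tilde K$, the curvature identity $A_1=A-\mu_1^2$, and the index bookkeeping in the lifted formulas — is routine. For $1\le i\le n-1$ the discriminant condition $\lambda_i^2-4A_{i-1}\ge 0$ is automatic, since at each stage it records the reality of the eigenvalue of a self-adjoint operator on a nonzero space.
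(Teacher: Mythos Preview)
Your proof is correct and follows exactly the inductive approach the paper attributes to Opozda in Remark~\ref{Rm:Algo}: choose $e_1$ as a maximizer of $\Phi(X)=C(X,X,X)$ on the unit sphere, then iterate on $e_1^{\perp}$. The paper gives no proof beyond that sketch, so your version supplies the missing details---in particular the quadratic relation $B^{2}-\lambda_{1}B+A\,\mathrm{Id}_{V}=0$ from Lemma~\ref{Lem:Const}(iii) and the second-variation inequality $6\mu-3\lambda_{1}\le 0$ that forces every eigenvalue of $B$ to be the smaller root $\mu_{1}$.
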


\begin{remark}\label{Rm:Algo}
The proof is by induction. Opozda's method to obtain the orthonormal
basis $e_1,...,e_n$ in the above lemma is as follows. Let
\begin{equation}
C(X,Y,Z)=g(K(X,Y),Z).
\end{equation}
Then $C$ is a symmetric $(0,3)$-tensor. Denote by $S^1$ the unit
sphere in $T_p M$ and by $\Phi$ the function $\Phi(X)=C(X,X,X)$ on
$S^1$. The vector $e_1\in S^1$ is any unit vector at which $\Phi$
attains a local maximum, and $e_2\in \{e_1\}^\perp\cap S^1$ is any
unit vector at which $\Phi_{|\{e_1\}^\perp\cap S^1}$ attains a local
maximum, etc.
\end{remark}

\begin{corollary}\label{Cor:Zero}\cite{Opo}
If $A=0$ in Lemma \ref{Lem:Basis}, then there is an orthonormal
basis $e_1,...,e_n$ of $T_p M$ and numbers $\lambda _i$, $\mu _i$
for $1\leq i\leq n$ such that
\begin{equation}\label{eqK7}
K(e_i,e_i)=\lambda_i e_i,\qquad K(e_i,e_j)=0
\end{equation}
for $i\neq j$.
\end{corollary}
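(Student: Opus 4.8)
The plan is to read the conclusion straight off Lemma \ref{Lem:Basis}, once we show that the coefficients $\mu_i$ all vanish when $A=0$. Indeed, if $\mu_1=\dots=\mu_n=0$, then \eqref{eqK} becomes $K(e_i,e_i)=\lambda_i e_i$ and \eqref{eqK2} becomes $K(e_i,e_j)=0$ for $i<j$, which together with the symmetry $K(e_i,e_j)=K(e_j,e_i)$ is precisely \eqref{eqK7}. So the whole matter reduces to proving $\mu_i=0$ for $1\le i\le n$.

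I would do this by induction, carrying along the auxiliary numbers $A_i$ of \eqref{eqA} with inductive hypothesis ``$A_{i-1}=0$''. The base case is $A_0=A=0$. Assuming $A_{i-1}=0$, formula \eqref{eqmu} collapses to $\mu_i=\tfrac12\bigl(\lambda_i-\sqrt{\lambda_i^2}\bigr)=\tfrac12(\lambda_i-|\lambda_i|)$, which is $0$ as soon as $\lambda_i\ge 0$; granting that, \eqref{eqA} gives $A_i=A_{i-1}-\mu_i^2=0$, closing the induction. The one genuine point is therefore the sign of $\lambda_i$, and here I would invoke the construction recalled in Remark \ref{Rm:Algo}: each $e_i$ is chosen where the cubic form $\Phi(X)=C(X,X,X)$ attains a maximum on the sphere $\{e_1,\dots,e_{i-1}\}^{\perp}\cap S^1$, which is invariant under $X\mapsto -X$; since $\Phi$ is odd, its maximum over that sphere, attained at $e_i$, satisfies $\lambda_i=\Phi(e_i)\ge \Phi(-e_i)=-\lambda_i$, whence $\lambda_i\ge 0$. (If one only wants to use that $e_i$ is a local maximizer, one notes that a global maximizer exists and serves equally well.)

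The main obstacle is exactly this sign issue: without $\lambda_i\ge 0$ the formula \eqref{eqmu} would yield $\mu_i=\lambda_i<0$ and the induction would break, so some input from the extremal characterization of the basis — not merely the algebra of \eqref{eqmu}--\eqref{eqA} — is essential. As a cross-check, and as an alternative route that bypasses Lemma \ref{Lem:Basis} altogether, I would argue as follows: $A=0$ at $p$ for all planes is equivalent by Lemma \ref{Lem:ZeroK} to $[K,K]=0$ at $p$, i.e. the operators $K_X$ for $X\in T_pM$ form a commuting family; since each $K_X$ is $g$-self-adjoint (because $g(K_XY,Z)=C(X,Y,Z)$ is symmetric in $Y,Z$), the spectral theorem provides an orthonormal basis $e_1,\dots,e_n$ of common eigenvectors, and then $K(e_i,e_j)=K_{e_i}e_j$ is a multiple of $e_j$ while $K(e_i,e_j)=K(e_j,e_i)=K_{e_j}e_i$ is a multiple of $e_i$, forcing $K(e_i,e_j)=0$ for $i\ne j$ and leaving $K(e_i,e_i)=\lambda_i e_i$.
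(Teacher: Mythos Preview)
Your proposal is correct and in fact mirrors the paper exactly: the paper first asserts that the corollary ``follows immediately from Lemma~\ref{Lem:Basis}'' and then gives, as an alternative, precisely your simultaneous-diagonalization argument (the $K_X$ are $g$-self-adjoint and commute since $[K,K]=0$, hence share an orthonormal eigenbasis). Your elaboration of the first route is a useful addition, since you make explicit the one nontrivial point the paper suppresses---that $\mu_i=\tfrac12(\lambda_i-|\lambda_i|)$ vanishes only because the extremal construction in Remark~\ref{Rm:Algo} (taken at a global maximum) forces $\lambda_i\ge 0$.
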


Corollary \ref{Cor:Zero} follows immediately from Lemma
\ref{Lem:Basis}. A non-inductive proof is as follows. Since $g(K_X
Y,Z)=g(Y,K_X Z)$, we see that $K_X$ is self-adjoint and hence
diagonalizable. Since $[K,K]=0$, these $K_X$ commute with each
other, hence they are simultaneously diagonalizable with respect to
an orthonormal basis $e_1,\dots,e_n$.

\begin{definition}
A $(1,2)$-tensor $K$ on a Riemannian manifold $(M,g)$ is called
totally symmetric if the corresponding $(0,3)$-tensor
$$C(X,Y,Z)=g(K(X,Y),Z)$$
is symmetric.
\end{definition}

The following lemma is expected.
\begin{lemma}
If $K$ has expression as in \eqref{eqK} and \eqref{eqK2} with
numbers $\lambda_i,\mu_i$ satisfying \eqref{eqmu} and \eqref{eqA},
then $K$ is totally symmetric and has constant sectional
$K$-curvature equal to $A$.
\end{lemma}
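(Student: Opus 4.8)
\emph{Overview.} The lemma has two parts: total symmetry of $C(X,Y,Z)=g(K(X,Y),Z)$, and that every plane in $T_pM$ has sectional $K$-curvature $A$. Both are purely algebraic statements about the numbers $g(K(e_i,e_j),e_k)$, so the plan is to verify them directly on the given orthonormal basis $e_1,\dots,e_n$, using throughout that \eqref{eqK2} reads $K(e_i,e_j)=\mu_{\min(i,j)}\,e_{\max(i,j)}$ for $i\neq j$ while \eqref{eqK} gives $K(e_i,e_i)$.

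\emph{Total symmetry.} Put $c_{ijk}=g(K(e_i,e_j),e_k)$; I claim $c_{ijk}$ depends only on the multiset $\{i,j,k\}$, which is exactly total symmetry. Indeed $c_{iii}=\lambda_i$; if $i,j,k$ are pairwise distinct then $K(e_i,e_j)=\mu_{\min(i,j)}e_{\max(i,j)}\perp e_k$, so $c_{ijk}=0$; and if the multiset is $\{m,m,\ell\}$ with $\ell\neq m$, then evaluating the two arrangements $(m,m,\ell)$ and $(m,\ell,m)$ gives in either case $\mu_\ell$ when $\ell<m$ and $0$ when $\ell>m$. Hence $C$ is totally symmetric.

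\emph{Curvature, the easy cases.} By Lemma \ref{Lem:Const} it suffices to check condition (ii): $[K,K](e_i,e_j)e_k=A\big(g(e_j,e_k)e_i-g(e_i,e_k)e_j\big)$ for all $i,j,k$, and by Lemma \ref{Lem:KK}(i) we may assume $i<j$. Writing $[K,K](e_i,e_j)e_k=K_{e_i}\!\big(K(e_j,e_k)\big)-K_{e_j}\!\big(K(e_i,e_k)\big)$, I would split according to the position of $k$. If $k<i$, or $i<k<j$, or $k>j$, then both $K(e_i,e_k)$ and $K(e_j,e_k)$ are scalar multiples of single basis vectors; applying $K_{e_i}$, resp.\ $K_{e_j}$, once more produces two equal vectors, so the bracket vanishes, matching the right-hand side, which is $0$ since $k\notin\{i,j\}$. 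Only the off-diagonal formula for $K$ is used here.

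\emph{Curvature, the two diagonal cases, and the obstacle.} The cases $k=i$ and $k=j$ are where \eqref{eqmu} and \eqref{eqA} enter, and they are the only genuine computation. Expanding (now $K(e_i,e_i)$ or $K(e_j,e_j)$ is involved), all cross terms cancel, leaving $[K,K](e_i,e_j)e_j=\big(\sum_{a<i}\mu_a^2+\lambda_i\mu_i-\mu_i^2\big)e_i$ and $[K,K](e_i,e_j)e_i=-\big(\sum_{a<i}\mu_a^2+\lambda_i\mu_i-\mu_i^2\big)e_j$. Since $\mu_i$ is, by \eqref{eqmu}, a root of $x^2-\lambda_ix+A_{i-1}=0$, we get $\lambda_i\mu_i-\mu_i^2=A_{i-1}$; and telescoping \eqref{eqA} gives $\sum_{a=1}^{i-1}\mu_a^2=A-A_{i-1}$. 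Adding these, $\sum_{a<i}\mu_a^2+\lambda_i\mu_i-\mu_i^2=A$, which is precisely what (ii) demands (since $i<j$, the right-hand sides are $A(g(e_j,e_j)e_i-g(e_i,e_j)e_j)=Ae_i$ and $A(g(e_j,e_i)e_i-g(e_i,e_i)e_j)=-Ae_j$). This verifies (ii), so by Lemma \ref{Lem:Const} every plane at $p$ has sectional $K$-curvature $A$. The only obstacle is the bookkeeping in these two diagonal cases together with the two scalar identities $\lambda_i\mu_i-\mu_i^2=A_{i-1}$ and $A_{i-1}=A-\sum_{a<i}\mu_a^2$; a slightly more structural alternative is an induction on $n$, splitting $T_pM=\mathbb R e_1\oplus\mathrm{span}(e_2,\dots,e_n)$, on which $K$ restricts to the analogous data with constant $A_1=A-\mu_1^2$, and checking that the terms of $K$ involving $e_1$ contribute exactly the missing $\mu_1^2$.
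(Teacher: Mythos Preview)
Your proof is correct and follows essentially the same strategy as the paper: verify total symmetry by a case analysis on the basis indices, then check one of the equivalent conditions of Lemma~\ref{Lem:Const} by a further case analysis, with the only substantive computation being the identity $\sum_{a<i}\mu_a^2+\lambda_i\mu_i-\mu_i^2=A$ obtained from \eqref{eqmu} and \eqref{eqA}. The one cosmetic difference is that the paper verifies the scalar condition (iii) of Lemma~\ref{Lem:Const} on four indices (using all the symmetries of Lemma~\ref{Lem:KK} to reduce to six cases, with Case~5 being the key one), whereas you verify the vector condition (ii) on three indices; your diagonal case $k=j$ is exactly the paper's Case~5, and the computations coincide.
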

\begin{proof}
To show $K$ is totally symmetric, we only need to check
\begin{equation}\label{eqK6}
g(K(e_i,e_j),e_k)=g(K(e_i,e_k),e_j).
\end{equation}
Without loss of
generality, we may assume $i\leq j$.

If $i,j,k$ are distinct, then $i<j$,
$$g(K(e_i,e_j),e_k)=g(\mu_i e_j,e_k)=0,$$
and
\begin{equation*}
g(K(e_i,e_k),e_j) = \begin{cases} g(\mu_i e_k,e_j)=0 &\mbox{if }
i<k,
\\ g(\mu_k e_i,e_j)=0 & \mbox{if } i>k.
\end{cases}
\end{equation*}

If exactly two of $i,j,k$ are equal, we may take $i=j$, then
\begin{equation*}
g(K(e_i,e_i),e_k) =g(\mu_1 e_1+\cdots+\mu_{i-1}e_{i-1}+\lambda_i
e_i, e_k)= \begin{cases} 0 &\mbox{if } i<k,
\\ \mu_k & \mbox{if }
i>k.
\end{cases}
\end{equation*}
and
\begin{equation*}
g(K(e_i,e_k),e_i) = \begin{cases} g(\mu_i e_k,e_i)=0 &\mbox{if }
i<k,
\\ g(\mu_k e_i,e_i)=\mu_k & \mbox{if } i>k.
\end{cases}
\end{equation*}
In both cases, we have proved \eqref{eqK6}.

Now we show that $K$ has constant sectional $K$-curvature equal to
$A$. By Lemma \ref{Lem:Const}, it is sufficient to prove that for
all $i,j,k,l$,
\begin{equation}\label{eqK3}
g(K(e_i,e_k),K(e_j,e_l))-g(K(e_j,e_k),K(e_i,e_l))=A(\delta_{ik}\delta_{jl}-\delta_{jk}\delta_{il}).
\end{equation}
The left-hand side of \eqref{eqK3} is just
$g([K,K](e_i,e_j)e_l,e_k)$.

By the symmetries of $[K,K]$ in Lemma \ref{Lem:KK}, we could assume
$i<j,k<l,i\leq k$. Denote by $LH$ the left-hand side and $RH$ the
righ-hand side of \eqref{eqK3} respectively. We treat six cases
separately.

Case 1. $i<j<k<l$.

Case 2. $i<j=k<l$.

Case 3. $i<k<j<l$.

In all of the above three cases, obviously $LH=RH=0$.

Case 4. $i<k<j=l$. Then $RH=0$ and
$$LH=g(\mu_i e_k,\mu_k e_k)-g(\mu_k e_l,\mu_i
e_l)=0.$$

Case 5. $i=k<l=j$. Then $RH=A$. Using
$\lambda_i\mu_i-\mu_i^2=A_{i-1}$ and $\mu_i^2+A_i=A_{i-1}$,
\begin{align*}
LH&=g(K(e_k,e_k),K(e_j,e_j))-g(K(e_j,e_k),K(e_k,e_j))\\
&=(\mu_1^2+\cdots+\mu_{k-1}^2+\lambda_k\mu_k)-\mu_k^2\\
&=\mu_1^2+\cdots+\mu_{k-1}^2+A_{k-1}\\
&=A.
\end{align*}

Case 6. $i\leq k<l<j$. Then obviously $LH=RH=0$.
\end{proof}


Consider a family of probability distributions $$M=\{p(x,\theta)\mid
\theta\in\Theta\},\quad \text{$\Theta$ is a domain in $\mathbb
R^n$}$$ For each $\theta$, $p(x,\theta)$ is defined on a measure
space $ \mathcal X$ and satisfies $\int_{\mathcal
X}p(x,\theta)dx=1$.

Let $\ell_\theta(x)=\log p(x,\theta)$. Under mild conditions, $M$ is
a manifold with a Riemannian metric
$$g_{ij}(\theta)=E_\theta\left(\frac{\partial \ell_\theta(x)}{\partial \theta^i}\frac{\partial \ell_\theta(x)}{\partial \theta^j}\right)=
\int_{\mathcal X}\frac{\partial \ell_\theta(x)}{\partial
\theta^i}\frac{\partial \ell_\theta(x)}{\partial
\theta^j}p(x,\theta)dx,$$ called the Fisher information matrix. A
statistical structure on $M$ is given by taking
\begin{equation}
C_{ijk}=-\frac{1}{2}E_\theta\left(\frac{\partial
\ell_\theta(x)}{\partial \theta^i}\frac{\partial
\ell_\theta(x)}{\partial \theta^j}\frac{\partial
\ell_\theta(x)}{\partial \theta^k}\right),
\end{equation}
or equivalently in terms of Amari-Chentsov tensor
\begin{equation}\label{eqAC}
T_{ijk}=E_\theta\left(\frac{\partial \ell_\theta(x)}{\partial
\theta^i}\frac{\partial \ell_\theta(x)}{\partial
\theta^j}\frac{\partial \ell_\theta(x)}{\partial \theta^k}\right).
\end{equation}

\begin{example}\label{Ex:exp}
The exponential family consists of probability distributions of the
form
$$p(x,\theta)=\exp\{Q(x)+\sum_{i=1}^n F_i(x)\theta^i-\varphi(\theta)\}$$

We know that the Fisher information matrix of exponential family is
given by
$$g_{ij}(\theta)=\frac{\partial \varphi}{\partial
\theta^i\partial\theta^j},$$
which is a Hessian metric.

Denote by $T$ the Amari-Chentsov tensor \eqref{eqAC}. Then
$T_{ijk}(\theta) =
\partial_k(g_{ij}) = \partial_i \partial_j \partial_k
\varphi(\theta)$. The Christoffel symbols and curvature tensors of
the $\alpha$-connections are given by
\begin{align*}
\Gamma^{(\alpha)}_{ijk}&= \frac{1-\alpha}{2} T_{ijk}(\theta),\\
R^{(\alpha)}_{ijkl}&= \frac{1-\alpha^2}{4}g^{pq}(T_{ilp}T_{jkq} -
T_{ikp}T_{jlq}).
\end{align*}
So both $\nabla=\nabla^{(1)}$ and $\overline\nabla=\nabla^{(-1)}$
are flat. Namely the exponential family is dually flat.

Write $S(X,Y,Z,W)=g([K,K](X,Y)W,Z)$. Then by \eqref{eqR2},
\begin{equation}
S_{ijkl}=-\hat R_{ijkl}
\end{equation}
Namely the sectional $K$-curvature $k(\pi)$ and the sectional
curvature $\hat k(\pi)$ for exponential family are related by
$k(\pi)=-\hat k(\pi)$ for any plane $\pi$.

If we take $(M,g,\nabla^{(\alpha)})$ as the statistical structure,
namely the totally symmetric $(1,2)$-tensor on $M$ is now $\alpha
K$. Then $k(\pi)=-\alpha^2\hat k(\pi)$ for any plane $\pi$. In fact,
this relation holds for any Hessian manifold.
\end{example}

 \vskip 30pt
    \section{Frobenius structure}\label{sec:Frob}

    \begin{definition} A Frobenius algebra $V$ is a
    finite-dimensional commutative associative algebra (with unit)
    over $\mathbb R$ (or $\mathbb C$) that satisfies either of the
    following two equivalent conditions:
    \begin{itemize}
    \item[(i)]
    There is a non-degenerate inner product $g$ such that
     $$g(ab,c)=g(a,bc).$$

    \item[(ii)]
   There is a linear form $\theta: V\rightarrow\mathbb R$ such that
   $$g(a,b)=\theta(ab)$$
   is a non-degenerate inner product.
    \end{itemize}
    \end{definition}

\begin{remark}\label{Rm:SS}
When $g$ is positive-definite, the Frobenius algebra is semisimple,
which means that it is isomorphic to $\mathbb R^n$ (or $\mathbb
C^n$) with component-wise multiplication.
\end{remark}


A Frobenius manifold is a manifold with a smoothly varying Frobenius
algebra structure on the tangent space. The non-degenerate inner
product $g$ serves as a pseudo-Riemannian metric. On such manifold,
we have a symmetric $(0,3)$-tensor
\begin{equation}\label{eqC3}
C(X,Y,Z)=g(XY,Z)=\theta(XYZ).
\end{equation}
So it is a statistical manifold (with pseudo-Riemannian metric).

The full definition of a Frobenius manifold requires some additional
conditions like $g$ is flat, $\hat\nabla C$ is symmetric and
$\hat\nabla e$, where $e$ is unit vector field. Dubrovin's first
structure connection on a (complex) Frobenius manifold is given by
$$\nabla_XY=\hat\nabla_XY+\lambda XY,$$
where $\lambda\in\mathbb C$. Comparing it with the definition of
$\alpha$-connection \eqref{eqAlpha} naturally leads to the following
definition of a commutative product on the tangent space of a
statistical manifold $(M,g,K)$.
    \begin{equation}\label{eqProd}
        \partial_i \circ \partial_j=K(\partial_i ,
        \partial_j),
    \end{equation}
where $\partial_i=\partial/\partial x^i$.

\begin{remark}
The definition \eqref{eqProd} is essentially the same as
\cite[Section 4]{JTZ}, where instead of $K$ they used the
Amari-Chentsov tensor, hence differs with \eqref{eqProd} by a factor
$-2$.
\end{remark}

    \begin{proposition}\label{Prop:main}
        On statistical manifold $(M,g,K)$, the product \eqref{eqProd} is associative
        if and only if $M$ has zero sectional $K$-curvature.
        The assertion holds pointwise.
    \end{proposition}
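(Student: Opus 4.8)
The plan is to translate the associativity condition directly into a tensorial identity involving $K$ and then recognize that identity as the vanishing of $[K,K]$. Writing $X\circ Y = K(X,Y)$, associativity $(X\circ Y)\circ Z = X\circ(Y\circ Z)$ means $K(K(X,Y),Z) = K(X,K(Y,Z))$ for all vector fields (equivalently, all tangent vectors at a point, since the product is pointwise). Since $K_X$ denotes the left-multiplication operator $Y\mapsto K(X,Y)$, this reads $K_Z K_X Y = K_X K_Z Y$, i.e. $[K_X,K_Z]Y = 0$ for all $X,Y,Z$; but the latter is exactly $[K,K](X,Z)Y=0$, and the commutativity of the product has already been built into the definition. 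Hence associativity is equivalent to $[K,K]=0$, which by Lemma \ref{Lem:ZeroK} is equivalent to the sectional $K$-curvatures being constantly zero. The pointwise nature of the equivalence is automatic because everything here — the product, the operators $K_X$, the tensor $[K,K]$ — is algebraic on each tangent space.

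The one subtlety worth spelling out carefully is the bookkeeping of which arguments of $K$ play which role, since $K$ is symmetric in its two slots but the composition is not a priori symmetric. Concretely, I would write, for vectors $X,Y,Z\in T_pM$,
\begin{align*}
(X\circ Y)\circ Z &= K(K(X,Y),Z) = K_Z(K_X Y),\\
X\circ(Y\circ Z) &= K(X,K(Y,Z)) = K_X(K_Z Y),
\end{align*}
where in the second line I used $K(Y,Z)=K(Z,Y)$ to write $K(Y,Z)=K_Z Y$ and $K(X,\,\cdot\,)=K_X$. Subtracting gives $(X\circ Y)\circ Z - X\circ(Y\circ Z) = K_Z K_X Y - K_X K_Z Y = -[K_X,K_Z]Y = -[K,K](X,Z)Y$. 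So associativity for all $X,Y,Z$ is literally the statement $[K,K]\equiv 0$.

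From there the proof is essentially a citation: by Lemma \ref{Lem:ZeroK}, $[K,K]=0$ at $p$ is equivalent to the sectional $K$-curvatures at $p$ all vanishing, and the equivalence holds pointwise. One could also remark, as the introduction promises, that $[K,K]=0$ is the same as commutativity of the left-multiplication operators $K_X$, giving the alternative phrasing. I do not anticipate a genuine obstacle here; the "hard part," such as it is, is purely notational — making sure the symmetry of $K$ is invoked at the right spot so that the difference of the two triple products collapses to $\pm[K,K]$ rather than to some other combination. Optionally one may note that, combined with Corollary \ref{Cor:Zero} and Remark \ref{Rm:SS}, when $g$ is positive definite a statistical manifold with associative product has, at each point, an orthonormal basis in which the product is the componentwise (semisimple) one, $e_i\circ e_j = \delta_{ij}\lambda_i e_i$.
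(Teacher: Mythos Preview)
Your proof is correct and follows essentially the same approach as the paper: rewrite both triple products using the symmetry of $K$ so that their difference becomes $\pm[K_X,K_Z]Y$, identify this with $[K,K]$, and then invoke Lemma~\ref{Lem:ZeroK}. The only cosmetic difference is that the paper carries this out on coordinate vectors $\partial_i,\partial_j,\partial_k$ rather than on general $X,Y,Z$.
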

    \begin{proof}
    By the symmetry of $K$, we have
    \begin{equation*}
     \partial_i\circ (\partial_j\circ \partial_k)
     =K(\partial_i,K(\partial_j,\partial_k))=K(\partial_i,K(\partial_k,\partial_j))=K_{\partial_i}K_{\partial_k}\partial_j
    \end{equation*}
    and
    \begin{equation*}
     (\partial_i\circ \partial_j)\circ \partial_k
     =K(K(\partial_i,\partial_j),\partial_k)=K(\partial_k,K(\partial_i,\partial_j))=K_{\partial_k}K_{\partial_i}\partial_j.
    \end{equation*}
     Therefore $\partial_i\circ (\partial_j\circ \partial_k)=(\partial_i\circ \partial_j)\circ \partial_k$ if and only if
     $K_{\partial_i}K_{\partial_k}\partial_j-K_{\partial_k}K_{\partial_i}\partial_j=0$.
     The latter is just
     $$[K,K](\partial_i,\partial_j)\partial_k=0,$$
     which is equivalent to that the sectional $K$-curvatures are
     zero
     by Lemma \ref{Lem:ZeroK}.
\end{proof}

In fact, Proposition \ref{Prop:main} is also a consequence of the
following purely algebraic statement.
\begin{proposition}
If a set $S$ is equipped with a commutative product $S\times
S\rightarrow S$, then the product is associative if and only if left
multiplication operators $l_a$ (where $l_a(x)=ax$) commute.
\end{proposition}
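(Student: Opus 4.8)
The plan is to prove the two directions of the equivalence directly from the definitions, using only commutativity of the product. Write $l_a$ for the left multiplication operator $x\mapsto ax$, and note first that by commutativity $ab=l_a(b)=l_b(a)$, so the product is entirely encoded by the family $\{l_a\}_{a\in S}$.

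For the forward direction, suppose the product is associative. For arbitrary $a,b,x\in S$ I would compute $l_a l_b(x)=a(bx)$ and $l_b l_a(x)=b(ax)$. Using associativity, $a(bx)=(ab)x$ and $b(ax)=(ba)x$, and then commutativity gives $ab=ba$, so $a(bx)=b(ax)$ for all $x$; hence $l_a l_b=l_b l_a$. Conversely, suppose $l_a l_b=l_b l_a$ for all $a,b\in S$. Given $a,b,c\in S$ I want $(ab)c=a(bc)$. The cleanest route is: $a(bc)=l_a(bc)=l_a l_b(c)$, while $(ab)c$, by commutativity, equals $c(ab)=l_c(ab)=l_c l_a(b)=l_a l_c(b)$ using the commuting hypothesis, and $l_a l_c(b)=a(cb)=a(bc)$ again by commutativity of the product. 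So $(ab)c=a(bc)$.

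The main obstacle — really the only subtlety — is bookkeeping: one must be careful to invoke commutativity of the underlying product (to turn $(ab)c$ into $c(ab)$, or $ab$ into $ba$) at exactly the right moments, since without it the statement is false. I expect the entire argument to be two short chains of equalities, with no real difficulty, and it immediately specializes to Proposition \ref{Prop:main} by taking $S=T_pM$ with $X\circ Y=K(X,Y)$, whose commutativity is the symmetry of $K$ and whose left multiplication operators are the $K_X$.
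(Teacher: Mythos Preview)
Your proof is correct and is essentially the same as the paper's: both directions are short chains of equalities that alternate between commutativity of the product and either associativity or the commuting hypothesis, and your chains match the paper's up to the order in which the equalities are read.
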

\begin{proof}
If the associativity holds, namely $a(bc)=(ab)c$ for all $a,b,c\in
S$, then
$$a(bc)=(ab)c=(ba)c=b(ac).$$
We proved $l_a$ and $l_b$ commute.

If left multiplication operators commute, namely $a(bc)=b(ac)$ for
all $a,b,c\in S$, then
$$a(bc)=a(cb)=c(ab)=(ab)c,$$
namely the product is associative.
\end{proof}

\begin{proposition}\label{Prop:unit}
On a statistical manifold $(M,g,K)$ with zero sectional
$K$-curvature, the product \eqref{eqProd} has a unit if and only if
$K$ is non-degenerate, i.e., the map $X\rightarrow K_X$ is a
monomorphism.
\end{proposition}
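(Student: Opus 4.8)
The plan is to reduce the statement to the explicit normal form for $K$ supplied by Corollary~\ref{Cor:Zero}. Zero sectional $K$-curvature gives $[K,K]=0$ by Lemma~\ref{Lem:ZeroK}, so the self-adjoint operators $K_X$ commute and Corollary~\ref{Cor:Zero} furnishes an orthonormal basis $e_1,\dots,e_n$ of the tangent space together with scalars $\lambda_1,\dots,\lambda_n$ such that $K(e_i,e_i)=\lambda_i e_i$ and $K(e_i,e_j)=0$ for $i\neq j$. In this basis the product \eqref{eqProd} is diagonal, $e_i\circ e_i=\lambda_i e_i$ and $e_i\circ e_j=0$ for $i\neq j$, and $K_{e_i}=\lambda_i\pi_i$ where $\pi_i$ is the orthogonal projection onto $\mathbb{R}e_i$. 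I will show that each of the two conditions in the proposition is equivalent to $\lambda_i\neq 0$ for all $i$; since the product is commutative, a one-sided unit is automatically two-sided, so this suffices.

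For the ``only if'' direction I would argue purely formally. If $u$ is a unit then $K_u(v)=u\circ v=v$ for every $v$, i.e.\ $K_u=\mathrm{id}$; hence whenever $K_X=0$ we get $X=K_u(X)=K(u,X)=K(X,u)=K_X(u)=0$, using the symmetry of $K$. Thus $X\mapsto K_X$ is injective. (Equivalently, in the diagonalizing basis $\lambda_j=0$ would force $K_{e_j}=0$, contradicting injectivity.) For the ``if'' direction, suppose $X\mapsto K_X$ is a monomorphism. Then $K_{e_1},\dots,K_{e_n}$ are linearly independent in $\mathrm{End}(T_pM)$, and since $K_{e_i}=\lambda_i\pi_i$ with the $\pi_i$ projections onto mutually orthogonal lines, this forces every $\lambda_i$ to be nonzero. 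Setting $u=\sum_{i=1}^n\lambda_i^{-1}e_i$, one checks $u\circ e_j=\lambda_j^{-1}(e_j\circ e_j)=e_j$ for each $j$, hence $u\circ v=v$ for all $v$ by bilinearity, so $u$ is a unit.

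The computations involved are routine; the only genuine input is the reduction to a simultaneous eigenbasis, which is precisely Corollary~\ref{Cor:Zero} and uses both $[K,K]=0$ and the self-adjointness of the $K_X$ with respect to the Riemannian metric $g$. So the hypothesis of vanishing sectional $K$-curvature is exactly what converts the existence of a unit into the transparent numerical condition ``all $\lambda_i\neq 0$'', and the substantive half of the argument is the ``if'' direction, where this normal form is exploited.
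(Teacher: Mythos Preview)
Your proof is correct and follows the same route as the paper: reduce via Corollary~\ref{Cor:Zero} to the diagonal form $K(e_i,e_i)=\lambda_i e_i$, $K(e_i,e_j)=0$, observe that non-degeneracy is exactly $\lambda_i\neq 0$ for all $i$, and write down the unit $e=\sum_i \lambda_i^{-1}e_i$. The paper's proof is very terse and leaves these verifications implicit; you have filled them in. One small bonus in your write-up: your ``only if'' argument (a unit forces $K_u=\mathrm{id}$, hence $K_X=0\Rightarrow X=K_X(u)=0$) is coordinate-free and does not even require the diagonalization, whereas the paper implicitly routes both directions through the basis.
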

\begin{proof}
By Corollary \ref{Cor:Zero}, $K$ is non-degenerate if and only if
all $\lambda_i$ in \eqref{eqK7} are nonzero. The unit $e$ is given
by
\begin{equation}\label{eqUnit}
e=\sum_{i=1}^{n}\frac{1}{\lambda_i}e_i.
\end{equation}
Corollary \ref{Cor:Zero} also implies that the algebra structure on
$TM$ is semisimple.
\end{proof}

\begin{remark}\label{Rm:Frob}
Propositions \ref{Prop:main} and \ref{Prop:unit} imply that a
statistical manifold $(M,g,K)$ with zero sectional $K$-curvature
(i.e., $[K,K]=0$) and non-degenerate $K$ has a semisimple Frobenius
algebra structure on its tangent space.
\end{remark}

\begin{remark}
If $g$ is a pseudo-Riemannian metric, Proposition \ref{Prop:main}
still holds, but Lemma \ref{Lem:Basis}, Corollary \ref{Cor:Zero} and
Proposition \ref{Prop:unit} may be no longer true, since their
proofs rely on the diagonalizability of $K_X$.
\end{remark}

On a statistical manifold $(M,g,\nabla)$, if we further assume that
the affine connection $\nabla=\hat\nabla+K$ is flat, then it is
called a Hessian manifold. They are so named because on a Hessian
manifold, the Riemannian metric $g$ can be locally expressed as
\begin{equation}\label{eqHess}
g_{ij}=\frac{\partial^2\varphi}{\partial x^i\partial x^j}
\end{equation}
for some function $\varphi$ on affine coordinates induced by the
flat connection $\nabla$. Since the dual connection
$\overline\nabla$ of a Hessian manifold is also flat, they are also
called dually flat manifold. We already see in Example \ref{Ex:exp}
that the exponential family is a Hessian manifold.

\begin{corollary}\label{Cor:hess}
The algebra structure \eqref{eqProd} on a Hessian manifold is
associative if and only if the Levi-Civita connection is flat. In
such case, the $\alpha$-connections are flat for all
$\alpha\in\mathbb R$.
\end{corollary}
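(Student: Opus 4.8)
The plan is to read off everything from the curvature identities of Section~\ref{sec:stat} together with Proposition~\ref{Prop:main}. First I would record the defining feature of a Hessian manifold: the connection $\nabla=\nabla^{(1)}$ is flat, so by the duality relation \eqref{eqR} its dual $\overline\nabla=\nabla^{(-1)}$ is flat as well, i.e.\ $R=\overline R=0$ identically on $M$.

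Next I would substitute this into \eqref{eqR2}. Since $R(X,Y)+\overline R(X,Y)=2\hat R(X,Y)+2[K_X,K_Y]$ and the left-hand side vanishes, we get the pointwise identity $\hat R(X,Y)=-[K_X,K_Y]=-[K,K](X,Y)$. Consequently the Levi-Civita connection $\hat\nabla$ is flat if and only if $[K,K]=0$. On the other hand, by Proposition~\ref{Prop:main} together with Lemma~\ref{Lem:ZeroK}, the product \eqref{eqProd} is associative if and only if the sectional $K$-curvature vanishes, which is equivalent to $[K,K]=0$. Chaining these equivalences yields the first assertion: associativity $\iff$ $[K,K]=0$ $\iff$ $\hat R=0$.

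For the last clause, suppose the product is associative, so that $[K_X,K_Y]=0$ for all $X,Y$. Feeding $R=\overline R=0$ and $[K_X,K_Y]=0$ into Zhang's formula \eqref{eqR5} gives $R^{(\alpha)}(X,Y)=\frac{1+\alpha}{2}R(X,Y)+\frac{1-\alpha}{2}\overline R(X,Y)-(1-\alpha^2)[K_X,K_Y]=0$ for every $\alpha\in\mathbb R$, so all $\alpha$-connections are flat. Alternatively one could invoke Lemma~\ref{Lem:Alpha} with the flat pair $\nabla^{(1)}=\nabla$ and $\nabla^{(0)}=\hat\nabla$, since $1+0\neq 0$; but Zhang's formula makes the computation immediate.

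I do not expect a genuine obstacle: the statement is essentially bookkeeping on top of Proposition~\ref{Prop:main} and the identities \eqref{eqR}, \eqref{eqR2}, \eqref{eqR5}. The only point needing a little care is that one cannot directly apply Lemma~\ref{Lem:Alpha} to the pair $\beta=1,\gamma=-1$, since $\beta+\gamma=0$ is excluded there; using $\hat\nabla$ in place of $\overline\nabla$, or simply citing Zhang's formula, circumvents this.
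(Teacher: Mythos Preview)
Your proof is correct and follows essentially the same route as the paper: the paper's proof simply says that $R=\overline R=0$ on a Hessian manifold, so the first assertion follows from Proposition~\ref{Prop:main} and \eqref{eqR2}, and the last assertion from Lemma~\ref{Lem:Alpha}. Your write-up just makes explicit the intermediate identity $\hat R=-[K,K]$ and, for the second part, spells out Zhang's formula \eqref{eqR5} (or equivalently applies Lemma~\ref{Lem:Alpha} with $\beta=1,\gamma=0$), correctly flagging that the pair $\beta=1,\gamma=-1$ is not admissible there.
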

\begin{proof}
On a Hessian manifold, we have $R=\overline R=0$. So the first
assertion follows from Proposition \ref{Prop:main} and \eqref{eqR2}.
The last assertion is implied by Lemma \ref{Lem:Alpha}.
\end{proof}

The curvature tensor of the Hessian metric \eqref{eqHess} is given
by (see \cite{Tot})
\begin{equation}
\hat
R_{ijkl}=\frac{1}{4}g^{pq}(\varphi_{ilp}\varphi_{jkq}-\varphi_{ikp}\varphi_{jlq}),
\end{equation}
thus the vanishing condition of $\hat R$ is the famous WDVV
equation:
\begin{equation}\label{eqWDVV}
g^{pq}(\varphi_{ilp}\varphi_{jkq}-\varphi_{ikp}\varphi_{jlq})=0.
\end{equation}
Since WDVV equation is equivalent to the associativity, we got
another proof of Corollary \ref{Cor:hess}. Although originated from
string theory, WDVV equation appears in many problems of
differential geometry. See \cite[Section 2]{Tot} for an excellent
summary.

It is well-known that canonical basis exists around a semisimple
point of a Frobenius manifold. The set of all semisimple points on a
Frobenius manifold is open.
\begin{proposition}\label{Prop:Basis}\cite{Hit}
Around any point on a manifold with semisimple Frobenius structure,
there exists a canonical basis $u_1,\dots,u_n$, which are local
vector fields that satisfy
$$u_i^2=u_i,\qquad u_i u_j=0$$
for $i\neq j$. They are unique up to reordering.
\end{proposition}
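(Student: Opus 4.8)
The plan is to realize the $u_i$ as the pointwise primitive idempotents of the tangent algebra, produce them as eigenvectors of left multiplication by one cleverly chosen vector field, and then check that they depend smoothly on the base point and satisfy the required relations.

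First I would set up the local algebraic picture. A manifold with semisimple Frobenius structure is in particular a statistical manifold $(M,g,K)$ on which the product $X\circ Y=K(X,Y)$ is associative — so $[K,K]=0$ by Proposition \ref{Prop:main} — and has a unit, so $K$ is non-degenerate by Proposition \ref{Prop:unit}; moreover $g$ is positive definite, so Corollary \ref{Cor:Zero} applies at every point and each $(T_pM,\circ)$ is a semisimple commutative algebra, i.e.\ isomorphic to $\mathbb R^n$ with componentwise product. Fix the base point $p_0$. Since $g(K_XY,Z)=g(Y,K_XZ)$ and $g$ is positive definite, the left-multiplication operator $L_X:=K_X$ is $g$-self-adjoint and has real eigenvalues. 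Identifying $T_{p_0}M$ with $\mathbb R^n$ via its idempotent basis, $L_v$ is the diagonal operator whose entries are the components of $v$; hence for generic $v_0\in T_{p_0}M$ the operator $L_{v_0}$ has $n$ pairwise distinct eigenvalues. I would choose such a $v_0$ and extend it to a smooth local vector field $X_0$ with $X_0(p_0)=v_0$.

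Next I would run the spectral calculus of $L_{X_0}$. Having $n$ distinct real eigenvalues is an open condition, so on a neighborhood $U$ of $p_0$ the characteristic polynomial of $L_{X_0}$ has simple real roots $c_1(p)<\cdots<c_n(p)$, which are smooth functions of $p$ by the implicit function theorem, and the spectral projections $P_i=\prod_{j\neq i}(L_{X_0}-c_jI)/(c_i-c_j)$ are smooth rank-one fields of operators. Applying $P_i$ to an appropriate member of a local frame gives a smooth nowhere-vanishing local vector field $v_i$ spanning $\operatorname{im}P_i$. Because all $L_X$ commute (as $[K,K]=0$) and $L_{X_0}$ has simple spectrum, each $v_i(p)$ is a nonzero scalar multiple of a primitive idempotent, so $v_i\circ v_i=\mu_i v_i$ for a smooth, nowhere-zero scalar function $\mu_i$; I then set $u_i:=\mu_i^{-1}v_i$, a smooth local vector field with $u_i\circ u_i=u_i$. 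For $i\neq j$, associativity and commutativity give $c_i\,(u_i\circ u_j)=(X_0\circ u_i)\circ u_j=u_i\circ(X_0\circ u_j)=c_j\,(u_i\circ u_j)$, whence $u_i\circ u_j=0$ since $c_i\neq c_j$; and the $u_i$, being eigenvectors of $L_{X_0}$ for distinct eigenvalues, are pointwise linearly independent, so they form a local frame.

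For uniqueness, if $w_1,\dots,w_n$ is another local basis of vector fields with $w_i^2=w_i$ and $w_iw_j=0$ for $i\neq j$, then at each $p$ both $\{u_i(p)\}$ and $\{w_i(p)\}$ are complete sets of primitive idempotents of $(T_pM,\circ)\cong\mathbb R^n$, and such a set is unique; hence $w_i(p)=u_{\pi(p)(i)}(p)$ for a permutation $\pi(p)$. Continuity of both families, together with the fact that the $u_i(p)$ are distinct, forces $\pi$ to be locally constant, hence constant on a connected neighborhood of $p_0$. The main obstacle is precisely the smoothness assertion: it is the existence of the regular element $X_0$ with simple spectrum at $p_0$, combined with the smooth dependence of simple eigenvalues and their spectral projections, that upgrades the pointwise algebraic decomposition of $T_pM$ into the desired smooth vector fields.
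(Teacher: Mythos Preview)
Your argument is correct. The paper, however, does not give its own proof of this proposition: it is simply cited from Hitchin \cite{Hit}, and the text only remarks that Opozda \cite{Opo} gave a different proof ``under the setting of statistical manifold where a technical smoothing argument was used.'' So there is no proof in the paper to compare against line by line.

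That said, your approach is the standard spectral one and matches the spirit of Hitchin's treatment: pick a regular element so that left multiplication has simple spectrum, use smooth dependence of simple eigenvalues and their Lagrange-interpolation spectral projections to obtain smooth rank-one eigenbundles, and then normalize the resulting eigenvector fields to idempotents. The key points --- that all $L_X$ commute (from $[K,K]=0$), that each $L_X$ is $g$-self-adjoint (hence real-diagonalizable), that one-dimensional common eigenspaces in $T_pM\cong\mathbb R^n$ are exactly the lines through primitive idempotents, and that distinct simple eigenvalues force $u_i\circ u_j=0$ --- are all handled cleanly. Your uniqueness argument via the classification of complete sets of primitive idempotents in $\mathbb R^n$, together with local constancy of the permutation, is also fine. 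In contrast to Opozda's smoothing argument alluded to in the paper, your use of a single regular section $X_0$ makes smoothness of the $u_i$ immediate from the polynomial formula for the spectral projections, which is the cleaner route.
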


A different proof of Proposition \ref{Prop:Basis} was given in
\cite{Opo} under the setting of statistical manifold where a
technical smoothing argument was used.

Denote $C(X,Y,Z)=g(XY,Z)$ on a manifold with Frobenius structure.
\begin{proposition}\label{Prop:Basis2}\cite{Hit}
In Proposition \ref{Prop:Basis}, $\hat\nabla C$ is symmetric if and
only if
\begin{align}
g(\hat\nabla_{u_k}u_i,u_j)&=0 \label{equ}\\
g(\hat\nabla_{u_i}u_i,u_j)&=g(\hat\nabla_{u_j}u_j,u_i) \label{equ2}\\
g(\hat\nabla_{u_i}u_j,u_j)&=g(\hat\nabla_{u_i}u_j,u_i)=g(\hat\nabla_{u_j}u_i,u_j)
\label{equ3}
\end{align}
for distinct $i,j,k$.

In fact, when $\hat\nabla C$ is symmetric, there exists a local
orthogonal coordinate system $x^1,\dots,x^n$ such that
$u_i=\dfrac{\partial}{\partial x^i}$ and there is a function $\phi$
such that
$$g=\sum_i\frac{\partial\phi}{\partial x^i} dx^i\otimes dx^i.$$
Such a metric $g$ is called Darboux-Egoroff metric.
\end{proposition}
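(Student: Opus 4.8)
The plan is to work entirely in the canonical frame $u_1,\dots,u_n$ furnished by Proposition \ref{Prop:Basis} and to first record the normal forms of all the tensors involved. From $u_iu_j=\delta_{ij}u_i$ and the Frobenius identity $g(XY,Z)=g(X,YZ)$ one gets $g(u_i,u_j)=g(u_i,u_iu_j)=\delta_{ij}g(u_i,u_i)$; writing $g_i:=g(u_i,u_i)$ (nowhere zero, since otherwise $g(u_i,\cdot)\equiv 0$ contradicts nondegeneracy), the frame is orthogonal and $C(u_i,u_j,u_k)=g(u_iu_j,u_k)$ equals $g_i$ when $i=j=k$ and $0$ otherwise. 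Writing $\hat\nabla_{u_a}u_b=\sum_m\Gamma^m_{ab}u_m$ for the connection coefficients in this (a priori non-coordinate) frame, I would expand
$$(\hat\nabla_{u_a}C)(u_b,u_c,u_d)=u_a\bigl(C(u_b,u_c,u_d)\bigr)-C(\hat\nabla_{u_a}u_b,u_c,u_d)-C(u_b,\hat\nabla_{u_a}u_c,u_d)-C(u_b,u_c,\hat\nabla_{u_a}u_d),$$
substitute the normal form of $C$, and use metric compatibility $u_a(g(u_b,u_c))=g(\hat\nabla_{u_a}u_b,u_c)+g(u_b,\hat\nabla_{u_a}u_c)$ — in particular $u_a(g_b)=2\Gamma^b_{ab}g_b$ — to simplify the one term (present only when $b=c=d$) that carries a derivative of $g_b$.

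Since $C$ is totally symmetric, $\hat\nabla C$ is automatically symmetric in its last three slots, so symmetry of $\hat\nabla C$ is equivalent to invariance of $(\hat\nabla_{u_a}C)(u_b,u_c,u_d)$ under the transposition $a\leftrightarrow b$, i.e.\ to this quantity depending only on the multiset $\{a,b,c,d\}$. I would then run through the coincidence patterns of the four indices. If all four are distinct, or if the two that coincide sit so that the transposition is trivial, both sides vanish. The pattern $\{i,i,j,k\}$ with $i,j,k$ distinct forces $\Gamma^j_{ki}=0$, which is exactly \eqref{equ}; the pattern $\{i,i,j,j\}$ forces $\Gamma^j_{ii}g_j=\Gamma^i_{jj}g_i$, which is \eqref{equ2}; and the pattern $\{i,i,i,j\}$ (using $u_j(g_i)=2\Gamma^i_{ji}g_i$) forces $\Gamma^i_{ij}=\Gamma^i_{ji}$, and on interchanging $i,j$ also $\Gamma^j_{ij}=\Gamma^j_{ji}$. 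Combining the last two identities with \eqref{equ} shows that every component of $[u_i,u_j]=\sum_m(\Gamma^m_{ij}-\Gamma^m_{ji})u_m$ vanishes, hence $[u_i,u_j]=0$; feeding \eqref{equ}, \eqref{equ2} and metric compatibility (differentiating $g(u_i,u_j)=0$ along $u_i$ and along $u_j$) back in then yields the remaining equalities of \eqref{equ3}. Conversely, assuming \eqref{equ}–\eqref{equ3}, the identity $\Gamma^i_{ij}=\Gamma^i_{ji}$ reappears from \eqref{equ3}, so all the case computations become symmetric and $\hat\nabla C$ is symmetric. This settles the "if and only if".

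For the second assertion, once $\hat\nabla C$ is symmetric we have $[u_i,u_j]=0$ for all $i,j$, and the $u_i$ are pointwise independent, so by the standard theorem on pairwise commuting frames there exist, near any point, coordinates $x^1,\dots,x^n$ with $\partial/\partial x^i=u_i$; orthogonality of the frame gives $g=\sum_i g_i\,dx^i\otimes dx^i$. Finally, metric compatibility gives $\partial_i g_j=2\Gamma^j_{ij}g_j$ and $\partial_j g_i=2\Gamma^i_{ji}g_i$, while \eqref{equ3} (for the pairs $(i,j)$ and $(j,i)$) says precisely $\Gamma^j_{ij}g_j=\Gamma^i_{ij}g_i=\Gamma^i_{ji}g_i$, so $\partial_i g_j=\partial_j g_i$. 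Hence the $1$-form $\sum_i g_i\,dx^i$ is closed, thus locally exact, say $=d\phi$, and $g_i=\partial\phi/\partial x^i$, i.e.\ $g$ is Darboux–Egoroff.

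I expect the main obstacle to be the bookkeeping of the middle paragraph: one must be sure every coincidence pattern of the four indices is covered and correctly reduced, and must invoke metric compatibility (and, implicitly, torsion-freeness via the Lie bracket) at exactly the right places so that the $u_a(g_b)$ terms and the a priori non-symmetry of $\Gamma^m_{ab}$ in its lower indices are handled correctly. Everything else — the normal forms, the rectification of the commuting frame, and the closed-form argument — is routine.
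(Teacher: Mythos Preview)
The paper does not give its own proof of this proposition: it is quoted from Hitchin \cite{Hit} and stated without argument, so there is nothing to compare your approach against directly. What the paper does contain is the expansion
\[
(\hat\nabla_{u_i}C)(u_j,u_k,u_l)=u_iC(u_j,u_k,u_l)-C(\hat\nabla_{u_i}u_j,u_k,u_l)-C(u_j,\hat\nabla_{u_i}u_k,u_l)-C(u_j,u_k,\hat\nabla_{u_i}u_l),
\]
used in the proof of the next corollary; this is exactly the formula you build the case analysis on, so your method is the natural one.

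Your argument is sound. A few points worth recording so that the write-up is airtight. First, the derivation of the first equality in \eqref{equ3} is slightly indirect: the coincidence patterns only give you \eqref{equ}, \eqref{equ2}, and the symmetry $\Gamma^{i}_{ij}=\Gamma^{i}_{ji}$ directly; to obtain $g(\hat\nabla_{u_i}u_j,u_j)=g(\hat\nabla_{u_i}u_j,u_i)$ you must, as you say, differentiate $g(u_i,u_j)=0$ along $u_i$ and along $u_j$ and combine with \eqref{equ2}. Concretely, $\Gamma^{i}_{ij}g_i=-\Gamma^{j}_{ii}g_j=-\Gamma^{i}_{jj}g_i=\Gamma^{j}_{ji}g_j=\Gamma^{j}_{ij}g_j$, using metric compatibility twice, \eqref{equ2} once, and $\Gamma^{j}_{ji}=\Gamma^{j}_{ij}$ once. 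Second, in the converse direction you should note that \eqref{equ3} with $i,j$ swapped gives $\Gamma^{i}_{ji}=\Gamma^{i}_{ij}$ (compare the first and third members), which is precisely what you need to make the $\{i,i,i,j\}$ case symmetric. Third, for the Darboux--Egoroff step, the torsion-freeness of $\hat\nabla$ is what lets you write $[u_i,u_j]=\hat\nabla_{u_i}u_j-\hat\nabla_{u_j}u_i$; you invoke this implicitly but it should be stated. With these details spelled out, the proof is complete and self-contained.
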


\begin{corollary}\label{Cor:C}
Let $(M,g,K)$ be a statistical manifold with non-degenerate $K$ and
$[K,K]=0$. Then $\hat\nabla C=0$ if and only if $\hat\nabla u_i=0$
for all $i$. In particular, $\hat\nabla C=0$ implies that $g$ is
flat.
\end{corollary}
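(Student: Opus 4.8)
The plan is to prove Corollary~\ref{Cor:C} by combining Proposition~\ref{Prop:Basis2} with the special structure of the canonical basis from Corollary~\ref{Cor:Zero}. Recall that since $[K,K]=0$ and $K$ is non-degenerate, Remark~\ref{Rm:Frob} gives a semisimple Frobenius algebra on $TM$, and by Proposition~\ref{Prop:Basis} there is a local canonical basis $u_1,\dots,u_n$ with $u_iu_j=\delta_{ij}u_i$. In the notation of Corollary~\ref{Cor:Zero}, one has $u_i=\tfrac{1}{\lambda_i}e_i$, so the $u_i$ are mutually orthogonal (though not orthonormal) with $g(u_i,u_i)=1/\lambda_i^2$. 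The direction $\hat\nabla C=0\Rightarrow\hat\nabla u_i=0$ is the substantive one; the converse is immediate since $C(X,Y,Z)=g(XY,Z)$ is built algebraically from the product and the metric, and if $\hat\nabla u_i=0$ for all $i$ then in particular $\hat\nabla g=0$ (already known) and the product has $\hat\nabla$-parallel structure constants, so $\hat\nabla C=0$; and then $\hat R=0$ follows because a parallel frame forces the curvature to annihilate each $u_i$.

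For the forward direction, first I would observe that $\hat\nabla C=0$ implies $\hat\nabla C$ is symmetric, so Proposition~\ref{Prop:Basis2} applies and gives equations \eqref{equ}, \eqref{equ2}, \eqref{equ3}; moreover it produces an orthogonal coordinate system with $u_i=\partial/\partial x^i$ and a Darboux--Egoroff metric $g=\sum_i\phi_i\,dx^i\otimes dx^i$ where $\phi_i=\partial\phi/\partial x^i$. The goal is then to upgrade $\hat\nabla C=0$ (which is stronger than mere symmetry of $\hat\nabla C$) to the vanishing of all Christoffel symbols of $\hat\nabla$ in these coordinates, i.e.\ $\hat\nabla_{u_i}u_j=0$ for all $i,j$. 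The key computational input is the formula obtained by differentiating $C$: since $C(u_i,u_j,u_k)=g(u_iu_j,u_k)=\delta_{ij}\delta_{jk}\,g(u_i,u_i)$, the condition $(\hat\nabla_X C)(u_i,u_j,u_k)=0$ expands via the Leibniz rule into a combination of $g(\hat\nabla_X u_iu_j,u_k)$, $g(u_iu_j,\hat\nabla_X u_k)$ and $X(C(u_i,u_j,u_k))$. Feeding in the structure constants of the semisimple product and using $\hat\nabla g=0$, these relations force each $g(\hat\nabla_{u_k}u_i,u_j)$ to vanish for \emph{all} index patterns, not just distinct ones — in particular the diagonal terms $g(\hat\nabla_{u_i}u_i,u_j)$ and $g(\hat\nabla_{u_i}u_j,u_j)$ that Proposition~\ref{Prop:Basis2} only partially constrained.

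Concretely, I would write $u_iu_j=\delta_{ij}u_i$ and apply $\hat\nabla_{u_k}$ to $C(u_i,u_i,u_j)=0$ for $i\neq j$: Leibniz gives $g(\hat\nabla_{u_k}u_i\cdot u_i + u_i\cdot\hat\nabla_{u_k}u_i,\,u_j)+g(u_i,\hat\nabla_{u_k}u_j)=0$; expanding $\hat\nabla_{u_k}u_i=\sum_m\Gamma_{ki}^m u_m$ and using the product rule $u_mu_i=\delta_{mi}u_i$ isolates the $\Gamma$'s against the orthogonal metric and yields $\Gamma_{ki}^j\,g(u_i,u_i)=0$ type identities. Running this over the various cases $i=j=k$, $i=j\neq k$, $i\neq j=k$, $i\neq j\neq k$ and exploiting that the $g(u_i,u_i)$ are nonzero (non-degeneracy of $K$), I expect to conclude $\Gamma_{ki}^m=0$ identically, hence $\hat\nabla u_i=0$ for all $i$. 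The flatness of $g$ is then an immediate corollary: a globally (locally) parallel frame means the holonomy is trivial, so $\hat R=0$. The main obstacle I anticipate is bookkeeping: carefully disentangling which of the Leibniz-expanded terms survive in each index configuration, since the semisimple product makes many terms collapse but one must be sure no diagonal Christoffel symbol escapes the constraints — this is exactly where $\hat\nabla C=0$ (as opposed to just symmetry of $\hat\nabla C$) is essential, and where I would double-check against equations \eqref{equ}--\eqref{equ3} to confirm consistency.
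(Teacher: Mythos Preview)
Your proposal is correct and follows essentially the same route as the paper: expand $(\hat\nabla_{u_k}C)(u_i,u_j,u_l)=0$ via the Leibniz rule, use the semisimple product $u_iu_j=\delta_{ij}u_i$ together with equation~\eqref{equ} from Proposition~\ref{Prop:Basis2}, and run through the index configurations (including the diagonal case $i=j=k=l$, where comparing $u_i\,g(u_i,u_i)=3g(\hat\nabla_{u_i}u_i,u_i)$ from the Leibniz expansion with $u_i\,g(u_i,u_i)=2g(\hat\nabla_{u_i}u_i,u_i)$ from $\hat\nabla g=0$ kills the last Christoffel symbol). The only cosmetic difference is that the paper works directly with the canonical basis and never invokes the Darboux--Egoroff coordinate description, but this does not affect the argument.
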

\begin{proof}
By Remark \ref{Rm:Frob}, the results of Proposition
\ref{Prop:Basis2} may apply. We follow Hitchin's argument in the
proof of Proposition \ref{Prop:Basis2}. Note that $C(u_i,u_j,u_k)=0$
unless $i=j=k$.
\begin{equation*}
(\hat\nabla_{u_i}C)(u_j,u_k,u_l)=u_i
C(u_j,u_k,u_l)-C(\hat\nabla_{u_i}u_j,u_k,u_l)-C(u_j,\hat\nabla_{u_i}u_k,u_l)-C(u_j,u_k,\hat\nabla_{u_i}u_l).
\end{equation*}
Assume $\hat\nabla C=0$. Let $i\neq j$.

Setting $k=l=i$ gives
$g(\hat\nabla_{u_i}u_j,u_i)=C(\hat\nabla_{u_i}u_j,u_i,u_i)=0$.

Setting $k=l=j$ and using $u_i
g(u_j,u_j)=2g(\hat\nabla_{u_i}u_j,u_j)$ gives
$g(\hat\nabla_{u_i}u_j,u_j)=0$.

By \eqref{equ}, $g(\hat\nabla_{u_i}u_j,u_k)=0$ for distinct $i,j,k$.
So we proved
\begin{equation}\label{equ4}
\hat\nabla_{u_i}u_j=0,\quad i\neq j.
\end{equation}

Setting $i=j=k=l$ gives $u_i
g(u_i,u_i)=3g(\hat\nabla_{u_i}u_i,u_i)$. Hence
$g(\hat\nabla_{u_i}u_i,u_i)=0$.

Setting $i=j$ and $k=l$ gives $g(\hat\nabla_{u_i}u_i,u_j)=0$ for
$i\neq j$. So we proved
\begin{equation}\label{equ5}
\hat\nabla_{u_i}u_i=0,\quad \forall i.
\end{equation}
By \eqref{equ4} and \eqref{equ5}, we proved that $\hat\nabla C=0$
implies $\hat\nabla u_i=0$.

On the other hand, if $\hat\nabla u_i=0$, then
$(\hat\nabla_{u_i}C)(u_j,u_k,u_l)=0$ unless $j=k=l$. Note that
$$(\hat\nabla_{u_i}C)(u_j,u_j,u_j)=u_i C(u_j,u_j,u_j)=u_i g(u_j,u_j)=2g(\hat\nabla_{u_i}u_j,u_j)=0.$$
So we proved $\hat\nabla C=0$.
\end{proof}

The trace of $K$ is defined to be the vector field $E=\mathrm{tr}_g
K$, namely
\begin{equation}
E^h=g^{ij}K_{ij}^h.
\end{equation}

\begin{corollary}\label{Cor:C2}
Let $(M,g,K)$ be a statistical manifold with non-degenerate $K$ and
$[K,K]=0$. Then $\hat\nabla C$ is symmetric and $\hat\nabla E=0$ if
and only if $\hat\nabla C=0$ and $g(u_i,u_i)$ are constants for all
$i$.
\end{corollary}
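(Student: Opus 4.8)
The plan is to do everything in the canonical frame $u_1,\dots,u_n$ supplied by Remark \ref{Rm:Frob} and Proposition \ref{Prop:Basis}, and to reformulate $\hat\nabla E=0$ in a way that meshes with Hitchin's relations \eqref{equ}--\eqref{equ3}. First I would record three elementary facts. By Proposition \ref{Prop:Basis}, $K(u_i,u_i)=u_i$ and $K(u_i,u_j)=0$ for $i\neq j$; total symmetry of $C$ then gives, for $i\neq j$, $g(u_i,u_j)=g(K(u_i,u_i),u_j)=C(u_i,u_i,u_j)=g(K(u_i,u_j),u_i)=0$, so the frame is $g$-orthogonal. Setting $g_i:=g(u_i,u_i)$, these are positive functions because $g$ is positive definite. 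Contracting $K$ in this frame gives $E=\mathrm{tr}_g K=\sum_i g_i^{-1}u_i$ (use $K(u_a,u_b)=\delta_{ab}u_a$), hence $g(E,u_i)=1$ for every $i$. Since these functions are constant, metric compatibility of $\hat\nabla$ shows that $\hat\nabla E=0$ is equivalent to $g(E,\hat\nabla_{u_k}u_j)=0$, i.e.\ to $\sum_i g_i^{-1}\,g(\hat\nabla_{u_k}u_j,u_i)=0$, for all $j,k$.

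For the easy implication, assume $\hat\nabla C=0$. By Corollary \ref{Cor:C} this forces $\hat\nabla u_i=0$ for all $i$, hence $u_k(g_i)=2g(\hat\nabla_{u_k}u_i,u_i)=0$, so the $g_i$ are locally constant, and $\hat\nabla_{u_k}E=\sum_i u_k(g_i^{-1})u_i+\sum_i g_i^{-1}\hat\nabla_{u_k}u_i=0$; symmetry of $\hat\nabla C$ is trivial.

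For the substantive implication, assume $\hat\nabla C$ is symmetric and $\hat\nabla E=0$. Symmetry of $\hat\nabla C$ lets me invoke Proposition \ref{Prop:Basis2} to get local orthogonal coordinates with $u_i=\partial/\partial x^i$ together with the relations \eqref{equ}, \eqref{equ2}, \eqref{equ3}. Combining those with $u_k(g(u_a,u_b))=2g(\hat\nabla_{u_k}u_a,u_b)$, I would compute all components $g(\hat\nabla_{u_k}u_j,u_i)$: for $j\neq k$ one gets $\hat\nabla_{u_k}u_j\in\mathrm{span}(u_j,u_k)$ with $g(\hat\nabla_{u_k}u_j,u_j)=g(\hat\nabla_{u_k}u_j,u_k)=\tfrac12 u_k(g_j)$, while $g(\hat\nabla_{u_k}u_k,u_k)=\tfrac12 u_k(g_k)$ and $g(\hat\nabla_{u_k}u_k,u_j)=-\tfrac12 u_k(g_j)$ for $j\neq k$. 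Substituting into $\sum_i g_i^{-1}g(\hat\nabla_{u_k}u_j,u_i)=0$: the case $j\neq k$ yields $\tfrac12 u_k(g_j)(g_j^{-1}+g_k^{-1})=0$, and since $g_j,g_k>0$ the factor is nonzero, so $u_k(g_j)=0$; feeding this back, the case $j=k$ yields $\tfrac12 g_k^{-1}u_k(g_k)=0$, so $u_k(g_k)=0$ as well. Hence every $g_i$ is locally constant, the component formulas above then give $\hat\nabla_{u_k}u_j=0$ for all $j,k$, i.e.\ $\hat\nabla u_i=0$, and Corollary \ref{Cor:C} gives $\hat\nabla C=0$. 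Together with the constancy of the $g_i=g(u_i,u_i)$ just proved, this is the claim.

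The computation of the Christoffel-type components $g(\hat\nabla_{u_k}u_j,u_i)$ from Hitchin's relations and metric compatibility is routine bookkeeping in the orthogonal frame; the real content is spotting the reformulation $g(E,u_i)\equiv1$, which collapses $\hat\nabla E=0$ to a linear condition on those components. The one genuinely delicate point — and the only place the hypothesis enters beyond citing earlier results — is the passage from $g(E,\hat\nabla_{u_k}u_j)=0$ to $u_k(g_j)=0$, which needs $g_j^{-1}+g_k^{-1}\neq0$, i.e.\ positive-definiteness of $g$, consistently with the remark following Proposition \ref{Prop:unit} that this circle of results fails in the pseudo-Riemannian setting.
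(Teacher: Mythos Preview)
Your proof is correct and follows essentially the same route as the paper: work in the canonical frame $u_i$, use Hitchin's relations \eqref{equ}--\eqref{equ3} together with metric compatibility to compute the components $g(\hat\nabla_{u_k}u_j,u_i)$, and then read off from $\hat\nabla E=0$ that all these vanish. Your observation that $g(E,u_i)\equiv 1$, which turns $\hat\nabla E=0$ into the dual condition $g(E,\hat\nabla_{u_k}u_j)=0$, is a clean shortcut to the paper's direct expansion of $\hat\nabla_{u_i}E$; the resulting equations match exactly (your $\tfrac12 u_k(g_j)(g_j^{-1}+g_k^{-1})=0$ is the paper's $\lambda_j^2(\lambda_i^2+\lambda_j^2)f_{ij}=0$ under $g_j=\lambda_j^{-2}$, $f_{kj}=\tfrac12 u_k(g_j)$), and your remark that positivity of $g$ is what makes the factor $g_j^{-1}+g_k^{-1}$ nonvanishing is precisely the point.
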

\begin{proof}
By Corollary \ref{Cor:Zero}, there is an orthonormal basis
$e_1,...,e_n$ such that
\begin{equation*}
K(e_i,e_i)=\lambda_i e_i,\qquad K(e_i,e_j)=0
\end{equation*}
for $i\neq j$. Then
\begin{equation}
u_i=\frac{1}{\lambda_i}e_i,\qquad g(u_i,u_i)=\frac{1}{\lambda_i^2},
\end{equation}
where $\lambda_i$ are smooth functions. Let
\begin{equation}
f_{ij}=g(\hat\nabla_{u_i} u_j,u_j).
\end{equation}
Assume $\hat\nabla C$ is symmetric. Then by \eqref{equ},
\eqref{equ2} and \eqref{equ3}, we get $f_{ij}=f_{ji}$ and
\begin{align}
\nabla_{u_i} u_i&=\lambda_i^2 f_{ii}u_i-\sum_{j\neq
i}\lambda_j^2f_{ij}u_j \label{equ6} \\
\nabla_{u_i} u_j&=\lambda_i^2 f_{ij}u_i+\lambda_j^2
f_{ij}u_j.\label{equ7}
\end{align}
for $i\neq j$. From $u_i\cdot g(u_j,u_j)=2g(\hat\nabla_{u_i}
u_j,u_j)=2f_{ij}$, we get
\begin{equation}\label{equ8}
u_i(\lambda_j^2)=-2\lambda_j^4f_{ij}.
\end{equation}
Since $E=\sum_{j=1}^n\lambda_j^2 u_j$, then by \eqref{equ6},
\eqref{equ7} and \eqref{equ8}, we get
\begin{align*}
\hat\nabla_{u_i}E&=\sum_{j=1}^n u_i(\lambda_j^2)u_j+\sum_{j=1}^n \lambda_j^2\hat\nabla_{u_i}u_j\\
&=-2\sum_{j=1}^n \lambda_j^4f_{ij}u_j+\lambda_i^4
f_{ii}u_i-\lambda_i^2\sum_{j\neq i}\lambda_j^2f_{ij}u_j +\sum_{j\neq
i}\lambda_j^2(\lambda_i^2 f_{ij}u_i+\lambda_j^2 f_{ij}u_j)\\
&=\left(-\lambda_i^4f_{ii}+\sum_{j\neq
i}\lambda_i^2\lambda_j^2f_{ij}\right)u_i-\sum_{j\neq
i}\lambda_j^2(\lambda_i^2+\lambda_j^2)f_{ij}u_j.
\end{align*}
So $\hat\nabla_{u_i}E=0$ implies $f_{ij}=0$ for all $i,j$, hence
$\hat\nabla_{u_i} u_j=0$ for all $i,j$. By \eqref{equ8}, all
$\lambda_i$ are constants.

The reverse direction of the corollary is obvious.
\end{proof}

Corollaries \ref{Cor:C} and \ref{Cor:C2} provide alternative proofs
to Theorem 4.6 and part of Corollary 4.7 in \cite{Opo}. The above
results showed that the behavior of $\hat\nabla C$ or $\hat\nabla K$
gave strong constraints to the metric $g$. See \cite{Opo} for more
related results.

\begin{remark}
Semisimplicity is a very important property for Frobenius manifolds.
They correspond to integrable hierarchies of KdV type \cite{DZ}.
Opozda's algorithm that we described in Remark \ref{Rm:Algo} could
be used to explicitly calculate canonical basis at a semisimple
point of Frobenius manifold.
\end{remark}

In \cite{JTZ}, an invariant of statistical manifold called Yukawa
term was introduced. It is defined by
\begin{equation}\label{eqY2}
Y=C_{ijk}C^{ijk}-C_i C^i,
\end{equation}
where $C_i=C_{ijk}g^{jk}$. In fact, they used the Amari-Chentsov
tensor, hence their Yukawa term differs with \eqref{eqY2} by a
factor $4$.
\begin{proposition}\label{Prop:Yuk}
On a statistical manifold, if $[K,K]=0$, then the Yukawa term $Y=0$.
In $2$-dimension, the converse is also true.
\end{proposition}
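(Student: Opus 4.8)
The plan is to express the Yukawa term as a full metric contraction of the $(0,4)$-tensor $S(X,Y,Z,W)=g([K,K](X,Y)W,Z)$, after which both implications are short. First I would rewrite $S$ in terms of the symmetric cubic form $C_{ijk}=g(K(\partial_i,\partial_j),\partial_k)$. As in the proof of Lemma~\ref{Lem:KK}, self-adjointness of $K_X$ gives $g([K,K](X,Y)Z,W)=g(K(X,W),K(Y,Z))-g(K(Y,W),K(X,Z))$; hence, using $K_{ab}^{\,h}=g^{hp}C_{abp}$, in coordinates
$$S_{ijkl}=g^{pq}\bigl(C_{jlp}C_{ikq}-C_{ilp}C_{jkq}\bigr).$$
Contracting with $g^{ik}g^{jl}$ and invoking the total symmetry of $C$ (so that $g^{jl}C_{jlp}=C_p$ in the first term, while the second term collapses to the full square of $C$), one obtains the identity
$$g^{ik}g^{jl}S_{ijkl}=C_iC^i-C_{ijk}C^{ijk}=-Y .$$
I expect the only delicate point to be the index bookkeeping in this contraction, which is nevertheless routine once total symmetry of $C$ is used.

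Granting this identity, the first assertion is immediate: if $[K,K]=0$, then $S\equiv 0$, so $Y=-g^{ik}g^{jl}S_{ijkl}=0$. (Alternatively one could use Corollary~\ref{Cor:Zero}: in an orthonormal frame diagonalizing all $K_X$ one has $C_{iii}=\lambda_i$ and all other components zero, whence $C_{ijk}C^{ijk}=\sum_i\lambda_i^2=C_iC^i$ and $Y=0$.)

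For the converse when $n=2$: by Lemma~\ref{Lem:KK} the tensor $S$ has all the algebraic symmetries of a Riemann curvature tensor, and in dimension two such a tensor is determined by a single scalar $k$, the sectional $K$-curvature of the unique tangent plane; concretely $S_{ijkl}=k\,(g_{ik}g_{jl}-g_{il}g_{jk})$. This is exactly Lemma~\ref{Lem:Const}(ii), whose hypothesis (i) is automatic for $n=2$ since there is only one plane. Substituting into the identity above gives
$$Y=-g^{ik}g^{jl}S_{ijkl}=-k\,(n^2-n)=-2k .$$
Therefore $Y=0$ forces $k=0$, which by Lemma~\ref{Lem:ZeroK}, applied pointwise and noting that in dimension two ``all planes at $p$'' means the single plane $T_pM$, is equivalent to $[K,K]=0$. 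The main content is the contraction identity of the first step; after that both directions are formal.
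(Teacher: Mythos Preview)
Your argument is correct and takes a genuinely different, more invariant route than the paper's. The paper proves the first implication exactly via the diagonal frame of Corollary~\ref{Cor:Zero} (the approach you mention parenthetically), and establishes the two-dimensional converse by explicit computation in isothermal coordinates (Example~\ref{Ex:2d}): it writes both the condition $[K,K]=0$ (equation~\eqref{eqK4}) and the Yukawa term (equation~\eqref{eqY}) in terms of the four independent components $f_1,\dots,f_4$ of $C$ and observes that they agree up to the nonzero factor $2/\varphi^3$. Your contraction identity $Y=-g^{ik}g^{jl}S_{ijkl}$ unifies both directions and, combined with Lemma~\ref{Lem:Const} in dimension two, yields the clean relation $Y=-2k$ without any coordinate computation; it also explains \emph{why} the equivalence is special to $n=2$ (a curvature-type tensor there has a single degree of freedom, while in higher dimensions $Y$ is only the scalar trace of $S$). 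The paper's computation, on the other hand, produces the explicit coordinate formula~\eqref{eqY}, which feeds into the classification carried out in Example~\ref{Ex:2d}.
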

\begin{proof}
By Corollary \ref{Cor:Zero}, in the orthonormal basis
$e_1,\dots,e_n$,
$$C_{iii}=C^{iii}=\lambda_i,\qquad C_i=C^i=\lambda_i.$$
All other $C_{ijk}$ vanish. So $Y=n\lambda^2-n\lambda^2=0$.

For the last assertion in $2$-dimension, see \eqref{eqK4} and
\eqref{eqY} in Example \ref{Ex:2d}.
\end{proof}

From Proposition \ref{Prop:Yuk}, the vanishing of the Yukawa term is
a necessary condition for zero sectional $K$-curvature.

\begin{example}\label{Ex:2d}
Consider the isothermal coordinates on a $2$-manifold with metric
$$g(x,y)=\varphi(x,y)(dx\otimes dx+dy\otimes dy).$$
The symmetric tensor
$$C_{ijk}=K_{ij}^l g_{kl}=\varphi K_{ij}^k$$
has four independent components
$$f_1=C_{111},\quad f_2=C_{112},\quad f_3=C_{122},\quad f_4=C_{222}.$$
Then $[K,K]=0$ if and only if
\begin{equation}\label{eqK4}
f_2^2+f_3^2=f_1 f_3+f_2 f_4.
\end{equation}
The solutions consist of three families:
\begin{itemize}
\item[(i)] $f_3=0,f_2=f_4$,
\item[(ii)] $f_2=f_3=0$,
\item[(iii)] $f_3\neq 0,f_1=\frac{1}{f_3}(f_2^2+f_3^2-f_2 f_4)$.
\end{itemize}
$K$ is non-degenerate if and only if
\begin{equation}\label{eqK5}
\mathrm{rank}\begin{pmatrix} f_1 & f_2 & f_3 \\ f_2 & f_3 & f_4
\end{pmatrix}=2.
\end{equation}
When both \eqref{eqK4} and \eqref{eqK5} are satisfied, the unit $e$
in the Frobenius algebra is
\begin{itemize}
\item[(i)] $e=\frac{\varphi}{f_2}\partial_y$,

\item[(ii)]
$e=\frac{\varphi}{f_1}\partial_x+\frac{\varphi}{f_4}\partial_y$,

\item[(iii)] $e=\frac{\varphi f_3}{f_1 f_3-f_2^2}\partial_x-\frac{\varphi f_2}{f_1
f_3-f_2^2}\partial_y$,
\end{itemize}
corresponding to each of the three families of solutions of
\eqref{eqK4}.

The Yukawa term is given by
\begin{equation}\label{eqY}
Y=\frac{2}{\varphi^3}(f_2^2+f_3^2-f_1 f_3-f_2 f_4).
\end{equation}
\end{example}

Let $M$ be a statistical manifold with Frobenius structure. If
$\hat\nabla$ is flat and $\hat\nabla C$ is symmetric, then by the
Poincar\'e Lemma, there exist a potential function $F$, such that
\cite[Section 3.3]{Hit} $C_{ijk}=F_{ijk}$, where
\begin{equation}
F_{ijk}=\frac{\partial^3 F}{\partial x_i\partial x_j\partial x_k}
\end{equation}
and $x_1,\dots,x_n$ is the flat coordinates corresponding to
$\hat\nabla$.

Let $B$ be a flat pseudo-Riemannian metric on $M$. Denote by
$$e=\sum_{i=1}^n A_i\partial_i$$
the unit vector field, where $A_i$ are smooth functions. Then
\begin{equation}\label{eqB}
B_{ij}=C(\partial_i,\partial_j,e)=\sum_{k=1}^n A_k F_{ijk}.
\end{equation}
Let $F_i$ be the $n\times n$ matrix $(F_i)_{jk}=F_{ijk}$. The WDVV
equations are the following equations.
\begin{equation}\label{eqWDVV2}
F_{i}B^{-1}F_{j}=F_{j}B^{-1}F_{i}
\end{equation}
for all $i,j$.

There is large amount of work solving the potential WDVV equation
\eqref{eqWDVV2}, we only mention a solution related to $BC_n$ root
system in a recent paper of Alkadhem-Antoniou-Feigin \cite{AAF}.
\begin{theorem}\label{Thm:AAF}\cite{AAF}
Suppose numbers $r$, $s$ and $q$ satisfy $r=-8s-2q(n-2)$. Let
$$f(z)=\frac{1}{6}z^3-\frac{1}{4}\mathrm{Li}_3(e^{-2z}),$$
where $\mathrm{Li}_3$ is the trilogarithm function. Then the
function
\begin{align}\label{prepBCN}
F=r \sum_{i=1}^{n}f(x_{i})+s\sum_{i=1}^{n}f(2x_{i})+q\sum_{1\leq
i<j\leq n}\big(f(x_{i}+x_{j})+f(x_{i}-x_{j})\big)
\end{align}
satisfies WDVV equations \eqref{eqWDVV2} where $B$ is determined by
\eqref{eqB} and $A_k=\sinh(2x_k)$. In fact, $B$ is proportional to
the identity matrix
$$B_{ij}=h\delta_{ij},$$
where $h(x)=r+2q\sum_{k=1}^n\cosh 2x_k$.

\end{theorem}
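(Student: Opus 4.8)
The plan is to verify the WDVV equations \eqref{eqWDVV2} directly by exhibiting, at each point, a common eigenbasis for the matrices $B^{-1}F_i$, since the $F_i$ are the structure constants of the associative product and $B$ comes from pairing with the unit. First I would recall from the earlier discussion that $F_{ijk}=C(\partial_i,\partial_j,\partial_k)$ are the structure constants of a commutative product for which $e=\sum_k A_k\partial_k$ is the unit precisely when \eqref{eqB} holds; by Proposition~\ref{Prop:main} the product is associative iff $[K,K]=0$, and in that case Corollary~\ref{Cor:Zero} (semisimplicity, Remark~\ref{Rm:SS}) gives canonical idempotents. Concretely, WDVV \eqref{eqWDVV2} is equivalent to the commutativity $[\,l_{\partial_i},l_{\partial_j}]=0$ of the left-multiplication operators $l_{\partial_i}=B^{-1}F_i$ (this is exactly the algebraic Proposition following Proposition~\ref{Prop:main}), so the proof reduces to checking that the product defined by \eqref{prepBCN} and $A_k=\sinh 2x_k$ is associative.

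The key computational step is to evaluate $F_{ijk}=\partial_i\partial_j\partial_k F$ for the specific $F$ in \eqref{prepBCN}. Writing $g(z)=f''(z)=z+\coth z - 1 = z + \tfrac{1+e^{-2z}}{1-e^{-2z}}$ (so that $g'(z)=1-\operatorname{csch}^2 z$), one gets $f'''(z)=g'(z)$, and $F_{ijk}$ is a sum of terms of the form $g'(x_a\pm x_b)$ (and $g'(x_a)$, $g'(2x_a)$ with the coefficients $r,s,q$) distributed according to which of the indices $i,j,k$ coincide. I would organize this into the diagonal part $F_{iii}$ and the off-diagonal parts $F_{iij}$, $F_{ijk}$ ($i,j,k$ distinct), each expressed through $g'$ evaluated at $x_i\pm x_j$. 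Then I would compute $B_{ij}=\sum_k A_k F_{ijk}$ with $A_k=\sinh 2x_k$; the claim $B_{ij}=h(x)\delta_{ij}$ with $h=r+2q\sum_k\cosh 2x_k$ should fall out of the identity $\sinh 2a\cdot(1-\operatorname{csch}^2(a-b)) + \sinh 2b\cdot(1-\operatorname{csch}^2(a-b))$-type cancellations — more precisely, the off-diagonal entries vanish because contributions from $x_i+x_j$ and $x_i-x_j$ combine with a $\sinh$ weight to cancel, using $\sinh 2x_i - \sinh 2x_j = 2\cosh(x_i+x_j)\sinh(x_i-x_j)$ and $\operatorname{csch}^2$ identities. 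With $B$ diagonal, $B^{-1}F_i = h^{-1}F_i$, so \eqref{eqWDVV2} becomes $F_iF_j=F_jF_i$, i.e. ordinary commutativity of the symmetric matrices $F_i$.

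To prove $F_iF_j=F_jF_i$ I would pass to the eigenvector description: because $e$ is the unit for this product, $F_i\cdot(\text{coordinate vector of }e)=$ the $i$-th column datum, and the simultaneous eigenvectors of all $F_i$ are the canonical idempotents $u_1,\dots,u_n$ guaranteed by Corollary~\ref{Cor:Zero}/Proposition~\ref{Prop:Basis}. Equivalently — and this is the cleanest route avoiding heavy algebra — I would invoke the trilogarithm WDVV lemma already in the literature: the function $f$ with $f'''=z+\coth z-1$ satisfies the ``$\vee$-system'' functional identities, so that for any three configuration vectors the relevant $3\times 3$ determinant identities hold; the $BC_n$ root system with the stated multiplicities $(r,s,q)$ obeying the balance condition $r=-8s-2q(n-2)$ is exactly the $\vee$-condition, which is the known criterion (Veselov; Feigin--Veselov) for \eqref{prepBCN} to solve WDVV. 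I expect the main obstacle to be the bookkeeping in the second paragraph: verifying $B_{ij}=h\delta_{ij}$ requires carefully tracking how the $r$-, $s$-, and $q$-terms of $F$ contribute to each $B_{ij}$ and checking that the balance condition $r=-8s-2q(n-2)$ is exactly what makes the would-be non-scalar diagonal corrections collapse to the single function $h$ — the off-diagonal vanishing is a clean pairwise cancellation, but the diagonal identification genuinely uses the constraint on $r,s,q$.
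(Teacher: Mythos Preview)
The paper does not give a proof of this theorem: it is quoted verbatim from \cite{AAF} as an illustrative example of a known WDVV solution, with no argument supplied. So there is no ``paper's proof'' to compare against; what matters is whether your sketch stands on its own.

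It does not, for two reasons. First, your route (a) --- exhibiting the canonical idempotents $u_1,\dots,u_n$ via Corollary~\ref{Cor:Zero} or Proposition~\ref{Prop:Basis} as simultaneous eigenvectors of the $F_i$ --- is circular. Those results presuppose $[K,K]=0$, which by Proposition~\ref{Prop:main} is \emph{equivalent} to the associativity (i.e.\ to the WDVV equations) you are trying to establish. You cannot use semisimplicity of the algebra to prove it is an algebra. Second, your route (b) --- invoking the $\vee$-system criterion of Veselov and Feigin--Veselov --- is not a proof but a citation; it is exactly what \cite{AAF} itself does, and is morally the same move the present paper makes by simply quoting the theorem. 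If you intend an independent verification, the genuine work is the direct check that $(F_iF_j)_{kl}=(F_jF_i)_{kl}$, which for this $F$ reduces to a finite list of hyperbolic-function identities in the variables $x_i\pm x_j$; your sketch never engages with this.

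There is also a computational slip: for the given $f$ one has $f'''(z)=\coth z$, not $1-\operatorname{csch}^2 z$ (and $f''(z)=z+\log(1-e^{-2z})$, not $z+\coth z-1$). This matters because the actual identity underlying the off-diagonal vanishing of $B$ and the commutation of the $F_i$ is built from $\coth$, and the balance condition $r=-8s-2q(n-2)$ enters precisely when one collects the diagonal $\coth$-terms. Getting $f'''$ right is the first step toward a self-contained verification.
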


Finally, we gave two remarks. It would be interesting to study
statistical structures with constant sectional $K$ curvatures. As
noted in \cite{Opo}, in general we don't know whether the basis in
Lemma \ref{eqWDVV2} could be extended to be a smooth frame field,
although it works for trace-free statistical structure.

We know very few results on the global aspect of statistical
manifold with Frobenius structure. Hitchin \cite{Hit} showed that
there is a finite covering of such manifold which is parallelizable.
A first example to look is the compact Hessian manifolds. There is
the famous Chern conjecture \cite{FZ} that a flat manifold has zero
Euler characteristic, which is still open even for Hessian manifold.

$$ \ \ \ \ $$

\end{document}